\documentclass[11pt,english,a4paper]{amsart} 

\usepackage{amsthm}
\usepackage{amsmath}
\usepackage{amssymb}
\usepackage{xcolor}

\usepackage{todonotes}

\usepackage{a4wide}
\usepackage{mathrsfs}

\usepackage{graphicx}
\usepackage{caption}
\usepackage{subcaption}
\usepackage{color}
\usepackage{a4wide}
\usepackage{amscd}
\usepackage{latexsym}
\usepackage{dsfont}
\usepackage{etoolbox} 
\usepackage{xparse} 
\usepackage{mathtools}


\usepackage{hyperref}
\hypersetup{
	colorlinks,
	linkcolor=red,
	citecolor=black
}
\usepackage{colonequals}
\usepackage{siunitx}

\usepackage{stmaryrd}
\usepackage{tikz}
\usetikzlibrary{shapes.geometric, arrows.meta, decorations.pathmorphing}
\definecolor{black}  {RGB}{  0, 100, 200}
\definecolor{Green} {RGB}{150, 200,   0}
\definecolor{Gray}  {RGB}{150, 150, 150}
\definecolor{Orange}{RGB}{250, 150,   0} 
\definecolor{Red}   {RGB}{200,  50,  50} 

\usepackage{algorithm}
\usepackage{algpseudocode}

\DeclareMathOperator*{\argmin}{argmin}

\global\long\def\Cc{\mathbb{C}}

\global\long\def\R{\mathbb{R}}
\global\long\def\N{\mathbb{N}}

\global\long\def\cJ{\mathcal{J}}
\global\long\def\cJb{J}
\global\long\def\cG{\mathcal{G}}

\global\long\def\cO{\mathcal{O}}

\global\long\def\Tt{\mathbf{T}}
\global\long\def\Ut{\mathbf{U}}
\global\long\def\At{\mathbf{A}}
\global\long\def\Qt{\mathbf{Q}}
\global\long\def\Vt{\mathbf{V}}
\global\long\def\Psit{\mathbf{\Psi}}

\global\long\def\It{\mathbf{I}}
\global\long\def\bt{\mathbf{b}}

\global\long\def\ord{p}

\global\long\def\s{t}

\newcommand{\core}[1]{\left\llbracket\, \begin{matrix} #1 \end{matrix} \,\right\rrbracket}

 \theoremstyle{plain}
   \newtheorem{counter}{\protect\examplename}[section]

 \theoremstyle{plain}
   \newtheorem{example}[counter]{\protect Example}
  
 \theoremstyle{plain}

 \theoremstyle{plain}

 \theoremstyle{plain}
 \newtheorem{remark}[counter]{\protect Remark} 
  
\theoremstyle{plain}
  \newtheorem{problem}[counter]{\protect Problem}
  
 \theoremstyle{plain}

 \theoremstyle{plain}
   \newtheorem{proposition}[counter]{\protect Proposition}
  
 \theoremstyle{plain}

  \theoremstyle{plain}

\definecolor{grey}{rgb}{0.5,0.5,0.5}

\begin{document}

 \title[Fredholm integral equations for approximation and  training]{Fredholm integral equations for function approximation and  the training of neural networks}

\author{Patrick Gel\ss , Aizhan Issagali, and Ralf Kornhuber}

\address{Patrick Gel\ss, 
	AI in Society, Science, and Technology, 
 Zuse Institute Berlin, 
 14195 Berlin, Germany}
\email{gelss@zib.de}

\address{Aizhan Issagali, Institut f\"ur Mathematik, 
 Freie Universit\"at Berlin, 
 14195 Berlin, Germany}
\email{aizhan.issagali@fu-berlin.de}

\address{Ralf Kornhuber, Institut f\"ur Mathematik, 
 Freie Universit\"at Berlin, 
 14195 Berlin, Germany}
\email{kornhuber@math.fu-berlin.de}

\begin{abstract}
We present a novel and mathematically transparent approach to function approximation and the  training of large, high-dimensional neural networks, 
based on the approximate least-squares solution of associated Fredholm integral equations of the first kind by  Ritz-Galerkin discretization, 
Tikhonov regularization and tensor-train methods.
Practical application to supervised learning problems of regression and classification type confirm that the resulting algorithms are competitive with state-of-the-art neural network-based methods.
\end{abstract}

\keywords{Function approximation, training of neural networks, Ritz-Galerkin methods, Fredholm integral equations of the first kind, Tikhonov regularization, tensor trains}

\thanks{
This work was supported by the QuantERA II programme funded by the European Union’s Horizon 2020 research and innovation programme under Grant Agreement No. 101017733 and by the research center MATH+.\\
We would like to thank the anonymous referees for their constructive remarks that considerably helped to improve the paper.
}
 
\maketitle

%
 \section{Introduction}
Efficient and reliable methods for the training of neural networks are of fundamental importance for a multitude of applications
and therefore a flourishing field of mathematical research.
Stochastic gradient methods~\cite{amari1993backpropagation,bottou2012stochastic} are known and further developed since the sixties.
These methods  mostly perform very well and therefore are often considered as methods of choice in the field.
However, global convergence analysis is complicated and actual convergence might depend on  proper parameter tuning~\cite{cheridito2021non}.
Multilevel trust region methods~\cite{chen2022randomized,conn2000trust,mizutani2003structure}  on the one hand aim at increasing  efficiency 
by Newton-type linearization and on the other hand at increasing reliability by suitable step size control.
Multilevel versions~\cite{kopanicakova2022globally}  are intended to additionally transfer the 
efficiency of 
multigrid methods for the minimization of  energy functionals associated with
elliptic partial differential equations to  the minimization of loss functionals.

This paper is devoted to a novel and mathematically transparent approach to function approximation and
the training of large and high-dimensional shallow neural networks
with usual kernel functions $\psi$, training parameters $(\eta_i,u_i)_{i=1}^N$ with
$\eta_i \in Q \subset \R^s$, $u_i \in \R$, $I=1, \dots,N$, and intentionally large $s$ and $N$. 
The starting point of our considerations is that such neural networks can be interpreted
as Monte-Carlo approximations of  integrals  over $Q$,
once the parameters  $\eta_i$ are assumed to be equidistributed  on $Q$ (see also \cite{rotskoff2019trainability}).
Resulting Fredholm integral operators, now acting on  parameter functions rather than parameter vectors,
give rise to  so-called {\em Fredholm networks} providing a novel  approach to function approximation.
Training of such Fredholm networks amounts to  least-squares solution of linear Fredholm integral equations of the first kind
for a parameter function $u \colon Q \to \R$, the continuous counterpart of the discrete parameter vector $(u_i)_{i=1}^N$.
Fredholm-trained parameters  $ (\eta_i, u(\eta_i))_{i=1}^N$ of the original discrete neural network are then obtained by suitable sampling of $\eta_i \in Q$.

A similar approach has been developed for kernel methods in supervised and semi-supervised learning~\cite{que2016back,que2014learning,wang2019semi} 
and applied to the covariance shift problem in transfer learning~\cite{que2013inverse}.
%
 {Recent mean-field neural networks~\cite{Nitanda2022mfld} can be regarded as  Fredholm networks on measures. 
The associated mean-field approach for the training of neural networks 
is based on stochastic gradient flows for the minimization of corresponding (regularized) loss functionals on the space of probability measures.  
It provides convergence results for well-known stochastic gradient descent~\cite{mei2018mean,rotskoff2018neural,Hu2021mfld} and suitable generalizations~\cite{chizat2018global,Chizat2022mfld}.
}

%
%
 {In this paper, the continuous Fredholm training problem is discretized by Ritz-Galerkin methods 
with suitable ansatz spaces, and we apply Tikhonov regularization~\cite{Engl}.
Instead of gradient descent, the resulting (intentionally large) linear algebraic systems are then solved approximately by tensor-train methods, 
such as alternating linear schemes~\cite{HOLTZ2012,KLUS2019},
in order to enhance efficiency and to allow for more general architectures.
}
Approximately Fredholm-trained parameters  $(\tilde{\eta}_i,\tilde{u}(\tilde{\eta}_i))_{i=1}^N$ of the original  neural network 
are finally obtained from the resulting approximation $\tilde{u}$ of $u$ and  suitable sampling of $\tilde{\eta}_i \in Q$.
 {A priori residual error estimates for the loss functional, both for trained Fredholm and neural networks,  are given
in the special case of piecewise constant approximation and exact linear solution.}
While a  {comprehensive} numerical analysis   {and  possible extensions to deep neural networks}
are devoted to forthcoming papers, 
 {a major} goal of this work is to confirm the practical potential of the Fredholm approach by numerical experiments 
and comparison with state-of-the-art  neural network-based methods.

The paper is organized as follows. After describing the architecture of the neural networks in question
together with the resulting discrete training problem in the next Section~\ref{sec:NN},
we introduce Fredholm networks and Fredholm training problems
as  their continuous counterparts and briefly discuss  ill-posedness in Section~\ref{sec:AL}. 
Ritz-Galerkin discretization and Tikhonov regularization are presented in Section~\ref{sec:R-GT}.
Utilizing suitable factorizations of the kernel functions $\psi$ (as derived in the Appendix~\ref{app}), 
Section~\ref{sec:TT} is devoted to tensor-train methods for the iterative solution of the  large linear systems 
associated with intentionally high-dimensional parameter domains $Q\subset \R^s$.
In Section~\ref{sec:SAMPLE}, we harvest the results on the training of  Fredholm networks, briefly discuss Monte-Carlo sampling,
and introduce a novel inductive importance sampling strategy. 
In the next Section~\ref{sec:NUMEX}, we apply our methods to three well-established test problems of regression and classification type
with different computational complexity. In particular, we consider the UCI banknote authentication data set from \cite{Lohweg2013},
concrete compressive strength prediction~\cite{YEH1998, Yeh2003, Yeh2006, Shah2022}, and the well-known MNIST data set~\cite{Lecun1998}. 
For all these problems, Fredholm networks  and Fredholm-trained neural networks
turn out to be highly competitive  with state-of-the-art  approaches without any problem-specific features or tuning.
 {
Note that, with some linear algebraic tricks, neural tangent kernel (NTK) methods~\cite{jacot2018neural} are able to achieve
comparable performance even on more challenging datasets like CIFAR-10~\cite{arora2019harnessing}.}
The efficiency of Fredholm-trained neural networks however seems to rely  on a careful choice of sampling strategy:
While straightforward quasi-Monte-Carlo sampling appears to work perfectly well  for simple problems, 
more advanced strategies, such as inductive importance sampling, seem to be required in more complicated cases. 
 {
The final Section~\ref{sec:CONCOUT} is devoted to some concluding remarks and a brief outlook on deep Fredholm networks.}

%
%
\section{Large single layer neural networks}\label{sec:NN}
\subsection{Architecture}\label{sec:ARCH}
We consider a parametrized single layer neural network of the form 
\begin{equation} \label{eq:PARREP}
X \ni x \mapsto F_N(x, \zeta) = \frac{1}{N} \sum_{i=1}^N   \psi(x, \eta_i) u_i \in \R
\end{equation}
defined on a compact hypercube $X \subset \R^d$, for simplicity, 
with intentionally large number of neurons $N \in \N$, a suitable kernel function $ \psi: X \times Q \to \R$,
and associated parameter vector
\[
\zeta = (\zeta_i)_{i=1}^N, \quad  \zeta_i = (\eta_i, u_i) \in Q \times \R,
\]
where $Q = Q_1 \times \cdots \times Q_s\subset \R^s$. 

\emph{Ridge kernel functions} 
\begin{equation} \label{eq:RIGDE}
 \psi(x, \eta ) =  \sigma( w \cdot x + b), \quad \eta = (w, b) \in Q \subset \R^s, \; s=d+1,
\end{equation}
are characterized by suitable activation functions  $\sigma: \R \to \R$
and give rise to so-called single hidden layer perceptron models~\cite{pinkus1999approximation}.
In the simplest case  $\sigma \equiv \text{Id}$ and $b=0$, this definition reduces to the Euclidean scalar product  
$\psi(x, \eta) =  x \cdot \eta$ with $\eta = w \in \R^s$ and $s=d$.
More relevant choices for $\sigma$ include  the (discontinuous) \emph{Heaviside function}
\begin{equation} \label{eq:HEAVI}
\sigma_{\infty}(z) = \left \{
\begin{array}{ccl}
0 & \text{ if } z < 0,\\
1/2 & \text{ if } z =0, \\
1 & \text{ if } z > 0\\
\end{array}
\right .
\end{equation}
or suitable regularizations like the \emph{logistic activation function}   
\begin{equation} \label{eq:LOGISTIC}
 \sigma_\kappa(z) = \frac{\exp(\kappa z)}{1 + \exp(\kappa  z)} 
\end{equation}
with  $\kappa >0$. 
Note that $\sigma_{\kappa}(z) \to \sigma_{\infty}(z)$ holds for $\kappa \to \infty$ and all $z\in \R$. 

\emph{Radial kernel functions}~\cite{KARLIK2011} are  given by
\begin{equation} \label{eq:RADIAL}
\psi(x, \eta) = \sigma( \lVert x - w  \rVert ), \quad \eta = w \in Q\subset \R^s ,\; s=d,
\end{equation}
with  some vector norm $\lVert\,\cdot\,\rVert$ on $\R^d$ and suitable activation functions $\sigma$.
In the case of Gaussian activation functions 
\begin{equation} \label{eq:RADAC}
\sigma(z) = \exp\left( - \frac{z^2}{2 \kappa^2} \right)
\end{equation}
and the Euclidean norm $\lVert\,\cdot\,\rVert=\lVert\,\cdot\,\rVert_2$, we have
\begin{equation} \label{eq:GAUSSAC}
\psi(x,\eta) = 
\exp\left( - \frac{\lVert x- \eta \rVert_2^2 }{2 \kappa^2}\right) = 
\prod_{k=1}^d \exp \left( - \frac{(x_k-\eta_k)^2}{2 \kappa^2} \right).
\end{equation}
\subsection{Function data, loss functional, and  training} \label{subsec:FUNCDLO}
Throughout the following 
$L^2(X,\pi)$ stands for the Hilbert space of $\pi$ measurable, 
quadratically integrable functions on $X$ with canonical scalar product $(\cdot,\cdot)_\pi$ 
and induced norm $\| \cdot \|_\pi$. We write $L^2(X) = L^2(X,\pi)$ and $(\cdot,\cdot)_X=(\cdot,\cdot)_\pi$, $\|\cdot \|_X=\| \cdot \|_\pi$,
if $d\pi(x) =dx$ is the Lebesgue measure. Furthermore, $L^2(Q)$ stands for the  Hilbert space of Lebesgue measurable, 
quadratically integrable functions on $Q$ with canonical scalar product $(\cdot,\cdot)$ 
and induced norm $\| \cdot \|$.

We aim at the approximation of a function  $F: X \mapsto \R$
by parametrized ansatz functions $F_N(\cdot, \zeta)$ of the form \eqref{eq:PARREP}.
The parameters $\zeta$ are  determined by minimizing the loss functional
\begin{equation}\label{pval}
\cJ_N(\zeta) = \|F - F_N(\cdot, \zeta)\|_\pi^2 = \int_X (F(x) - F_N(x, \zeta))^2 \; d\pi(x)
\end{equation}
where the measure $\pi$ is associated with the training data available.
We will consider the following two cases of given
\begin{equation}\label{eq:DATA}
\text{ a)  function data}\;\; F \in L^2(X) \quad 
\text{ b)  point data} \;\; ((x_j, F(x_j)))_{j=1}^M \in (X \times \R)^M \subset \R^{M(d+1)}.
\end{equation}
The case a) of a given function $F$ is associated with the Lebesgue measure $d \pi(x)= dx$
while  case b) of given point data corresponds to the point measure $d \pi(x)= d\mu(x)$,
\begin{equation}\label{point}
\mu =   \frac{1}{M}\sum_{j=1}^M \delta_{x_j}, \quad x_1, \dots, x_M\in X.
\end{equation}
Training of the neural network $F_N(\cdot, \zeta)$  amounts to the (approximate) solution of the  following  discrete, non-convex, large-scale minimization problem.
\begin{problem} \label{prob:DISCOPT}
Find $\zeta^* =  (\zeta_i^*)_{i=1}^N, \; \zeta_i^*=(\eta^*_i, u^*_i) \in Q \times \R$, such that
\begin{equation} \label{eq:DMIN}
\cJ_N(\zeta^*) \leq  \cJ_N(\zeta) \qquad \forall  \zeta=(\zeta_i)_{i=1}^N, \; \zeta_i=(\eta_i, u_i) \in Q \times \R.
\end{equation}
\end{problem}

Approximation properties of neural networks of the form \eqref{eq:PARREP} have quite a history (see, e.g., \cite{chen1995approximation}
or the overview of Pinkus~ \cite{pinkus1999approximation}).
In particular, ridge kernels with any non-polynomial activation function $\sigma$  and unconstrained parameters $\zeta \in \R^{N(d+2)}$
provide uniform approximation of continuous functions $F$  on compact sets $X \subset \R^d$~\cite[Theorem~3.1]{pinkus1999approximation}.
While similar results for constrained parameters $\zeta \in Q \times \R$ seem to be rare,
the following proposition  is a consequence of  \cite[Theorem~2.6]{stinchcombe1990approximating}.

\begin{proposition} \label{prop:DISCAPPROX}
For the ridge kernel \eqref{eq:RIGDE} with logistic activation function \eqref{eq:LOGISTIC}, 
$Q$ containing the unit ball in $\R^{d+1}$,
and any function $F \in C(X)$, we have
\begin{equation} \label{eq:NNDENSE}
\underset{\zeta \in (Q \times \R)^N}{\inf} \; \underset{x \in X}{\max} \; |F(x) - F_N(x, \zeta)| \to 0 \quad \text{for} \quad N \to \infty.
\end{equation}
\end{proposition}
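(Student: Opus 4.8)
The plan is to reduce \eqref{eq:NNDENSE} to a density statement in $(C(X),\|\cdot\|_\infty)$ and then prove that density by a duality (Hahn--Banach) argument, extracting vanishing moments of any annihilating measure through differentiation with respect to the internal parameters $(w,b)$. First I would absorb the prefactor $1/N$: writing $c_i=u_i/N$ and recalling that $u_i\in\R$ is unconstrained while $Q$ contains the unit ball $B_1\subset\R^{d+1}$, so that it suffices to use parameters in $B_1$, the quantity in \eqref{eq:NNDENSE} equals the sup-norm distance of $F$ to the linear space
\[
\mathcal{S}_N = \Big\{ \sum_{i=1}^N c_i\, \sigma_\kappa(w_i\cdot x + b_i) : c_i \in \R,\ (w_i,b_i)\in B_1 \Big\}.
\]
Since $\mathcal{S}_N\subseteq\mathcal{S}_{N+1}$ (set $c_{N+1}=0$), this distance is non-increasing in $N$, and its limit is the distance of $F$ to $\mathcal{S}:=\bigcup_N\mathcal{S}_N=\operatorname{span}\{\sigma_\kappa(w\cdot x+b):(w,b)\in B_1\}\subseteq C(X)$. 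Hence it is enough to show that $\mathcal{S}$ is dense in $C(X)$.

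By the Riesz representation theorem and Hahn--Banach, $\mathcal{S}$ is dense iff every finite signed Borel measure $\nu$ on $X$ annihilating $\mathcal{S}$ is zero. So I would fix such a $\nu$ and set
\[
g(w,b) = \int_X \sigma_\kappa(w\cdot x + b)\, d\nu(x), \qquad (w,b)\in B_1,
\]
which is identically zero by assumption. Because $X$ is compact and $\sigma_\kappa$ is $C^\infty$ with all derivatives bounded on compact sets, I may differentiate under the integral sign, so $g\in C^\infty(B_1)$ and, for every multi-index $\alpha\in\N_0^d$ and every $k\in\N_0$,
\[
\frac{\partial^{|\alpha|+k}g}{\partial w^\alpha \partial b^k}(0,0) = \sigma_\kappa^{(|\alpha|+k)}(0) \int_X x^\alpha \, d\nu(x).
\]
Since $g\equiv 0$ near the origin, the left-hand side vanishes for all $\alpha,k$.

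The key arithmetic input is that $\sigma_\kappa(z)=\tfrac12\big(1+\tanh(\tfrac{\kappa}{2}z)\big)$, so $\sigma_\kappa-\tfrac12$ is odd and the Taylor coefficients of $\tanh$ at $0$ are nonzero at every odd order (they are the Bernoulli-number coefficients). Thus $\sigma_\kappa^{(m)}(0)\neq0$ for every odd $m$. Given any $\alpha$, I choose $k$ so that $m=|\alpha|+k$ is odd; the displayed identity then forces $\int_X x^\alpha\,d\nu=0$. Hence $\nu$ annihilates every monomial, and therefore every polynomial; since the polynomials are dense in $C(X)$ by Stone--Weierstrass, $\nu$ annihilates all of $C(X)$ and $\nu=0$. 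This yields density of $\mathcal{S}$ and proves \eqref{eq:NNDENSE}.

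I expect the main obstacle to be exactly this last non-degeneracy check: the argument collapses if, for some $|\alpha|$, the derivatives $\sigma_\kappa^{(m)}(0)$ vanished for all admissible $m\ge|\alpha|$. Indeed the even-order derivatives of the logistic do vanish at $0$, so it is essential that we may raise the derivative order freely through $b$-differentiation to reach an odd order, and that all odd-order derivatives are genuinely nonzero. Establishing the latter via the non-vanishing of the Bernoulli numbers in the $\tanh$ expansion is the technical heart of the proof, and is precisely the ingredient encapsulated in \cite[Theorem~2.6]{stinchcombe1990approximating}.
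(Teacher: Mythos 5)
Your proof is correct, but it takes a genuinely different route from the paper, which offers no argument at all: it simply derives the proposition as a consequence of the general bounded-weight density theorem of Stinchcombe and White \cite[Theorem~2.6]{stinchcombe1990approximating}. You instead give a self-contained, Cybenko-style argument specialized to the logistic activation. Your steps all check out: absorbing $1/N$ into unconstrained outer coefficients and restricting to $B_1\subset Q$ correctly reduces \eqref{eq:NNDENSE} to density of $\mathcal{S}=\operatorname{span}\{\sigma_\kappa(w\cdot x+b):(w,b)\in B_1\}$ in $C(X)$ (the monotone distances to $\mathcal{S}_N$ converge to the distance to the union); the Hahn--Banach/Riesz reduction to annihilating measures is standard on the compact hypercube $X$; the mixed-derivative identity $\partial_w^\alpha\partial_b^k\, g(0,0)=\sigma_\kappa^{(|\alpha|+k)}(0)\int_X x^\alpha\,d\nu$ is right, with differentiation under the integral justified by dominated convergence; and since $\sigma_\kappa-\tfrac12$ is odd with $\sigma_\kappa^{(m)}(0)\neq 0$ for every odd $m$ (non-vanishing of the Bernoulli numbers $B_{2n}$ in the $\tanh$ expansion), the bias derivative lets you reach an odd order from any $|\alpha|$, killing all moments of $\nu$; Stone--Weierstrass finishes. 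Note also that you use the hypothesis on $Q$ exactly where it is needed, namely to make the origin an interior point of the admissible parameter set. What the paper's citation buys is brevity and generality --- the cited theorem covers any non-polynomial activation analytic near a point, with inner weights confined to any set with nonempty interior --- whereas your proof makes the mechanism explicit and keeps the result self-contained, at the cost of relying on the specific arithmetic fact about the odd Taylor coefficients of $\tanh$, which, as you say, is the technical heart and should be backed in a final write-up by a short proof (e.g.\ via $B_{2n}\neq 0$ from $\zeta(2n)>0$) or a precise reference.
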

Hence,  assuming existence of a solution $\zeta^*$ for all $N \in \N$, we have $\cJ_N(\zeta^*) \to 0$ as $N\to \infty$
for $F \in L^2(X)$ in the case (9a) and $F\in C(X)$ in the case (9b), respectively.

%
\section{Continuous approximations of discrete training problems}\label{sec:AL} 
Following the ideas from \cite[Section 1.2]{rotskoff2019trainability}, we now introduce and investigate the asymptotic limit of $F_N(\cdot, \zeta)$ 
and of the corresponding Training Problem~\ref{prob:DISCOPT} for $N\to \infty$.
\subsection{Asymptotic integral operators} \label{subsec:ASIOP}
Assume  that $\psi(x, \cdot)\in L^2(Q)$ for each fixed $x \in X$.
Let  $\eta_i $,  $i = 1, ..., N$, be independent, identically equidistributed 
 {random samples  and let $u_i = u(\eta_i)$,   $i = 1, ..., N$, be the values of an arbitrary function $u \in L^2(Q)$ at those sample points.}
Then, denoting $|Q|= \int_Q 1\; dx$,
\begin{equation} \label{eq:SLLN}
\frac{1}{N} \sum_{i=1}^N   \psi(x,\eta_i)  {u_i}\; \to \;   \frac{1}{|Q|}  \int_{Q} \psi(x,\eta) {u}(\eta) \; d\eta
\quad \text{for} \quad N \to \infty
\end{equation} 
holds  for  
each fixed $x \in X$ by the strong law of large numbers. 

This observation suggests to consider the Fredholm integral operator 
\begin{equation}\label{eq:QOP}
\cG v =  \frac{1}{|Q|} \int_Q \psi(\cdot, \eta)v(\eta)\; d \eta, \quad v \in L^2(Q),
\end{equation}
Let us recall some well-known properties of $\cG$, cf.~\cite{Engl,Kress,Groetsch}.
\begin{proposition}\label{prop:BOUNDCOMP}
Assume that 
\begin{equation} \label{eq:BND}
\int_X \int_Q  \psi(x,\eta)^2\; d\eta \; dx < \infty  .
\end{equation}
Then  the integral operator
\[
\cG: L^2(Q) \to L^2(X) 
\]
 is a compact, linear mapping.
\end{proposition}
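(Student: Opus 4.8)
The plan is to verify the three asserted properties in turn, namely linearity, boundedness of $\cG$ as a map $L^2(Q) \to L^2(X)$, and compactness, the last being the only substantive point.

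Linearity is immediate, since the integral defining $\cG v$ depends linearly on $v$. For boundedness I would fix $v \in L^2(Q)$ and apply the Cauchy--Schwarz inequality in the $\eta$-variable to obtain, for every $x \in X$,
\begin{equation*}
(\cG v)(x)^2 \le \frac{1}{|Q|^2}\left(\int_Q \psi(x,\eta)^2\, d\eta\right)\|v\|^2.
\end{equation*}
Integrating over $X$ and invoking the hypothesis \eqref{eq:BND} then gives
\begin{equation*}
\|\cG v\|_X^2 \le \frac{1}{|Q|^2}\left(\int_X\int_Q \psi(x,\eta)^2\, d\eta\, dx\right)\|v\|^2,
\end{equation*}
which is finite and shows that $\cG v \in L^2(X)$, with operator norm bounded by $|Q|^{-1}$ times the (finite) $L^2(X\times Q)$-norm of $\psi$.

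For compactness I would use approximation by finite-rank operators. Condition \eqref{eq:BND} says precisely that $\psi \in L^2(X \times Q)$, so $\cG$ is a Hilbert--Schmidt integral operator. Picking orthonormal bases $(e_i)_{i\in\N}$ of $L^2(X)$ and $(f_j)_{j\in\N}$ of $L^2(Q)$, the products $e_i(x)f_j(\eta)$ form an orthonormal basis of $L^2(X\times Q)$, whence $\psi = \sum_{i,j} c_{ij}\, e_i\otimes f_j$ with $\sum_{i,j} c_{ij}^2 = \|\psi\|_{L^2(X\times Q)}^2 < \infty$. Truncating the sum to $i,j \le n$ yields a degenerate kernel $\psi_n$ whose associated operator $\cG_n$ has finite rank, and applying the boundedness estimate above to the kernel $\psi - \psi_n$ shows $\|\cG - \cG_n\| \to 0$ as $n \to \infty$. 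Being an operator-norm limit of finite-rank operators, $\cG$ is therefore compact.

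I expect the compactness step to be the main obstacle, since it is the only part requiring a genuine structural argument rather than a direct estimate. Concretely, one must either quote the standard fact that a uniform limit of finite-rank operators is compact, or supply the short self-contained argument that each $\cG_n$ has finite rank and that the Hilbert--Schmidt norm dominates the operator norm; the references \cite{Engl,Kress,Groetsch} provide either route. Linearity and boundedness, by contrast, are routine consequences of Cauchy--Schwarz and the square-integrability hypothesis.
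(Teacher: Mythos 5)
Your proof is correct, and it is exactly the standard Hilbert--Schmidt argument (Cauchy--Schwarz for boundedness, degenerate-kernel truncation for compactness) that the paper invokes implicitly: the paper offers no proof of its own, simply recalling the result as well known with a citation to \cite{Engl,Kress,Groetsch}, which contain precisely this argument.
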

Note that Proposition~\ref{prop:BOUNDCOMP} provides compactness of $\cG$,
e.g., for the radial and ridge kernel functions mentioned in Section~\ref{sec:ARCH}.

\subsection{Fredholm integral equations}
\subsubsection{Function data}
Let us first consider the case (\ref{eq:DATA}a) of a given function $F\in L^2(X)$.
In the light of \eqref{eq:SLLN}, we impose the additional constraints 
on the unknowns  $\zeta_i=(\eta_i,u_i)$, $i=1,\dots,N$, of the Training Problem~\ref{prob:DISCOPT} that
\begin{equation} \label{eq:ADHOC}
  \eta_i \text{ are samples of i.i.~equidistributed random variables and }  u_i = u(\eta_i), \quad i=1,\dots,N,
\end{equation}
with some function $u\in L^2(Q)$.
Then, for given function $F$, the asymptotic coincidence \eqref{eq:SLLN} of the discrete neural network $F_N(\cdot,( (\eta_i,u_i)_{i=1}^N)$ 
and the  integral operator $\cG u$ suggests to consider the loss functional
\begin{equation}
\cJ(v) = \int_{X}(F(x) - \cG v(x))^2\; dx.
\end{equation}
and the following continuous quadratic approximation of the discrete non-convex Training Problem~\ref{prob:DISCOPT}.  
\begin{problem}\label{prob:FFUNC}
Find $u \in L^2(Q)$ such that 
\begin{equation} \label{eq:CONTMIN}
\cJ(u) \leq \cJ(v) \quad  \forall v\in L^2(Q).
\end{equation}
\end{problem}

Observe that Problem~\ref{prob:FFUNC} is just the least-squares formulation of the Fredholm integral equation of the first kind
\begin{equation}\label{fkind}
\cG u = \frac{1}{|Q| }\int_{Q} \psi(\cdot, \eta) u(\eta)\; d\eta = F.
\end{equation}
In particular, \eqref{eq:CONTMIN} can be equivalently rewritten  as the  
normal equation
\begin{equation} \label{eq:LSQVOP}
\cG^* \cG u = \cG^* F\quad \text{in} \quad L^2(Q)
\end{equation}
utilizing  the variational formulation of \eqref{eq:CONTMIN},
\begin{equation} \label{eq:LSQVAR}
 (\cG u,\cG v)_X = (F, \cG v)_X \qquad \forall v \in L^2(Q), 
 \end{equation}
and the adjoint operator of $\cG$,
\[
L^2(X)\ni v \mapsto  \cG^* v = \frac{1}{|Q|}  \int_X \psi(x, \cdot)v(x)\; dx \in L^2(Q).
\]
This leads to the following well-known existence result (see, e.g.,  \cite[Theorem~2.5]{Engl}),
where $R(\cG)$ and $N(\cG)$ stand for the range and null space of $\cG$, respectively, $\oplus$ denotes the direct sum, 
and the superscript $\perp$ indicates the orthogonal complement in $L^2(X)$.
\begin{proposition} \label{prop:FPEXIST}
Assume that $F \in  R(\cG) + R(\cG)^\perp \subset L^2(X)$. 
Then (\ref{fkind}) has least-squares solutions $u^\dagger \oplus N(\cG)$,
with $u^\dagger = \cG^\dagger F$ being the unique minimal-norm least squares solution (best-approximate solution), 
and $\cG^\dagger \colon R(\cG) \oplus R(\cG)^\perp \to N(\cG)^\perp \subset L^2(Q)$
denoting the generalised Moore-Penrose inverse of $\cG$.
\end{proposition}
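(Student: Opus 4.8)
The plan is to recast the minimization of $\cJ$ as a projection problem in $L^2(X)$ and then read off existence, the structure of the solution set, and the minimal-norm solution from Hilbert-space geometry. Since $\cG$ is bounded and linear by Proposition~\ref{prop:BOUNDCOMP}, the functional $\cJ(v)=\|F-\cG v\|_X^2$ is convex, and a standard variational argument shows that $v\in L^2(Q)$ is a least-squares solution (a minimizer of $\cJ$) if and only if it solves the normal equation $\cG^*\cG v=\cG^*F$. I would establish this equivalence first, as it is purely formal and rests only on the variational identity \eqref{eq:LSQVAR} together with $R(\cG)^\perp=N(\cG^*)$.

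Next I would introduce the orthogonal projection $P$ of $L^2(X)$ onto the closed subspace $\overline{R(\cG)}$. The projection theorem gives that $PF$ is the unique nearest point to $F$ in $\overline{R(\cG)}$, with residual $F-PF\in\overline{R(\cG)}^\perp=R(\cG)^\perp$; consequently $v$ is a least-squares solution if and only if $\cG v=PF$, and such a $v$ exists if and only if $PF$ lies in $R(\cG)$ itself rather than merely in its closure.

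The key step, and the one I expect to be the main obstacle, is to verify this last condition from the hypothesis. Writing $F=g+h$ with $g\in R(\cG)$ and $h\in R(\cG)^\perp$ according to $F\in R(\cG)+R(\cG)^\perp$, and using that $P$ fixes $\overline{R(\cG)}$ while annihilating $R(\cG)^\perp$, I obtain $PF=g\in R(\cG)$. Thus $\cG v=PF$ is solvable and $\cJ$ attains its minimum. This is exactly where compactness of $\cG$ matters: for a compact operator the range $R(\cG)$ is in general not closed, so the assumption $F\in R(\cG)+R(\cG)^\perp$ cannot be omitted---it is what forces the projection $PF$ into the range and not just into its closure.

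Finally, fixing one least-squares solution $v_0$ of $\cG v=PF$, the whole solution set equals the affine subspace $v_0+N(\cG)$, since $\cG v=\cG v_0$ holds iff $v-v_0\in N(\cG)$; this is the meaning of $u^\dagger\oplus N(\cG)$. As $N(\cG)$ is closed in $L^2(Q)$, this affine set possesses a unique element of minimal norm, characterized as its intersection with $N(\cG)^\perp$. Declaring $u^\dagger$ to be this element and setting $\cG^\dagger F:=u^\dagger$ yields the best-approximate solution; checking that $F\mapsto u^\dagger$ is linear on the domain $R(\cG)\oplus R(\cG)^\perp$---well defined because that sum is orthogonal, hence direct---identifies $\cG^\dagger$ as the Moore-Penrose inverse with the asserted mapping properties. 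The remaining verifications are routine.
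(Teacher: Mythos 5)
Your proof is correct and follows essentially the same route as the paper: the paper does not prove this proposition itself but presents it as a well-known result citing \cite[Theorem~2.5]{Engl}, and the standard proof of that theorem is precisely your argument---the normal-equation characterization, orthogonal projection onto $\overline{R(\cG)}$ reducing solvability to $PF\in R(\cG)$ (which the hypothesis $F\in R(\cG)+R(\cG)^\perp$ guarantees), the affine solution set $u^\dagger+N(\cG)$, and the unique minimal-norm element in $N(\cG)^\perp$ defining $\cG^\dagger$. Your side remark on compactness, namely that $R(\cG)$ is generally non-closed so the hypothesis on $F$ cannot be dropped, is also consistent with the paper's discussion immediately following the proposition.
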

If $\cG$ is compact, then $R(\cG)$ is closed or, equivalently, $\cG^\dagger$ is bounded, 
if and only if $R(\cG)$ has finite dimension (see, e.g., \cite[Proposition~2.7]{Engl}).
As a consequence,  the integral equation (\ref{fkind}) is ill-posed, even in the least-squares sense
\eqref{eq:CONTMIN} for usual kernel functions such as the radial and ridge kernels 
presented in Section~\ref{sec:ARCH}.

\begin{proposition} \label{prop:DISCRIMINATORY}
Assume that the kernel function $\psi$ is discriminatory in the sense that
\begin{equation} \label{eq:DISCRIMINATORY}
N(\cG^*)=\{0\}.
\end{equation}
Then $R(\cG)$ is dense in $L^2(X)$ and therefore
$\underset{u \in L^2(Q)}\inf {\mathcal J}(u) = 0$ holds for all $F \in L^2(X)$.
\end{proposition}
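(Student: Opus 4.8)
The plan is to show that the density statement follows from a standard functional-analytic fact: for a bounded linear operator $\cG$ between Hilbert spaces, the orthogonal complement of the range equals the null space of the adjoint, i.e.
\begin{equation} \label{eq:ORTHO}
\overline{R(\cG)} = N(\cG^*)^\perp.
\end{equation}
First I would invoke \eqref{eq:ORTHO}, which is a consequence of the closed-range duality $R(\cG)^\perp = N(\cG^*)$ together with $\overline{R(\cG)} = (R(\cG)^\perp)^\perp$. Both hold for any bounded linear operator, and $\cG$ is bounded (indeed compact) by Proposition~\ref{prop:BOUNDCOMP} under assumption \eqref{eq:BND}. Then the discriminatory hypothesis \eqref{eq:DISCRIMINATORY}, namely $N(\cG^*)=\{0\}$, immediately gives $N(\cG^*)^\perp = \{0\}^\perp = L^2(X)$, whence $\overline{R(\cG)} = L^2(X)$; that is, $R(\cG)$ is dense in $L^2(X)$.

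With density established, I would derive the infimum claim. For arbitrary $F \in L^2(X)$ and any $\varepsilon > 0$, density of $R(\cG)$ produces some $v \in L^2(Q)$ with $\|F - \cG v\|_X < \varepsilon$. Since $\cJ(v) = \|F - \cG v\|_X^2 < \varepsilon^2$ by the definition of the loss functional, and $\cJ$ is nonnegative, the infimum over $v \in L^2(Q)$ must be zero. As $\varepsilon>0$ was arbitrary and $F \in L^2(X)$ was arbitrary, this yields $\inf_{u \in L^2(Q)} \cJ(u) = 0$ for all $F \in L^2(X)$, completing the argument.

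I expect the proof to be essentially routine, so there is no serious obstacle in the analytic sense; the only point requiring a little care is to be explicit about which orthogonal-complement identity is being used and to confirm it applies to a merely bounded (not necessarily closed-range) operator. The identity $\overline{R(\cG)} = N(\cG^*)^\perp$ is valid without any closed-range assumption, which is exactly what makes density---rather than surjectivity---the correct conclusion here: by the earlier discussion following Proposition~\ref{prop:FPEXIST}, $R(\cG)$ itself is \emph{not} closed for the compact operators of interest, so one cannot upgrade density to $R(\cG) = L^2(X)$. I would therefore emphasize that the conclusion concerns closure of the range and the vanishing of the infimum, not attainment of a minimizer. Finally, I would remark in passing that $\cG^*$ having trivial null space is precisely the functional-analytic rendering of the classical notion of a discriminatory activation function, tying the hypothesis back to the universal approximation literature cited around Proposition~\ref{prop:DISCAPPROX}.
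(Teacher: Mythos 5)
Your proof is correct: the identity $\overline{R(\cG)} = N(\cG^*)^\perp$ for bounded Hilbert-space operators, combined with the discriminatory hypothesis, gives density, and density immediately forces $\inf_{u}\cJ(u)=0$. The paper itself offers no explicit proof—it treats the proposition as a standard consequence of the well-known facts cited from \cite{Engl,Kress,Groetsch}—and your argument is exactly that standard argument, including the correct observation that no closed-range assumption is needed and that the infimum need not be attained.
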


For ridge kernels with Heaviside or logistic activation function,
we obtain $v = 0$, if $\cG^* v(\eta) = 0$  holds for all $\eta \in \R^s$ (and not only for all $\eta \in Q$) by  \cite[Lemma~1]{Cybenko}.
However, similar results for the present case of  bounded parameters $\eta \in Q$ seem to be  unknown.


\subsubsection{Point data} \label{subsec:PD}
In the case  (\ref{eq:DATA}b) of given point values of $F$, we replace the corresponding point values of $F_N$ in \eqref{pval} 
by the point values of the limiting integral operator $\cG$
to obtain the approximation $\cJb$,
\begin{equation} \label{eq:COLO}
\cJb(v) = \frac{1}{M}\sum_{j=1}^M (F(x_j) - \cG v(x_j))^2 ,
\end{equation}
of the discrete loss functional $\cJ_N$ and the following quadratic  approximation of the Training Problem~\ref{prob:DISCOPT}.  

\begin{problem}\label{prob:PPOINT}
Find $u \in L^2(Q)$ such that 
\begin{equation}\label{lq}
\cJb(u) \leq \cJb(v) \quad \forall v \in L^2(Q).
\end{equation}
\end{problem}

Problem~\ref{prob:PPOINT} is the least-squares formulation of the interpolation problem to find  $u \in L^2(Q)$ such that 
\[
\cG_M u = F_M 
\]
with $F_M = (F(x_j))_{j=1}^M \in \R^M$ and  $\cG_M \colon L^2(Q) \to \R^M$ defined by $\cG_M v = (\cG v(x_i))_{i=1}^M \in \R^M$.
 {
 As a  semi-discrete counterpart of \eqref{eq:BND}, the condition  
\begin{equation} \label{eq:BNDDI}
\frac{1}{M}\sum_{i=1}^M \int_Q \psi(x_i, \eta)^2d\eta \leq C < \infty
\end{equation}
with a constant $C$ independent of $M$ implies that $\cG_M$ is uniformly bounded in $M$.
It also holds for all kernel functions mentioned in Section~\ref{sec:ARCH}.
}

In analogy to \eqref{eq:LSQVOP}, Problem~\ref{prob:PPOINT} can be equivalently reformulated as a linear normal equation
$\cG_M^* \cG_M u = \cG_M^* F_M$ in $L^2(Q)$ utilizing  the variational formulation
\begin{equation} \label{eq:LSQVARPOINT}
 \sum_{j=1}^M(\cG u)(x_j) (\cG v)(x_j) = \sum_{j=1}^M F(x_j)(\cG v)(x_j) \qquad \forall v \in L^2(Q), 
 \end{equation}
of Problem~\ref{prob:PPOINT} with the adjoint operator $\cG_M^*$ of $\cG_M$ given by
\[
\R^M \ni v=(v_j)_{j=1}^M \mapsto \cG_M^* v = \frac{1}{|Q|} \sum_{j=1}^M \psi(x_j,\cdot ) v_j \in L^2(Q).
\]
Observe that $R(\cG_M)$ is now finite-dimensional and thus closed. 
Hence, existence of a best-approximate solution $u=\cG_M^\dagger F_M$ of Problem~\ref{prob:PPOINT} for any point data $(x_i, F(x_i))$, $i=1,\dots,M$, 
follows from $R(\cG_M) \oplus R(\cG_M)^\perp = \R^M$ and the general Theorem~2.5 in \cite{Engl}. 
As a consequence, the Moore-Penrose inverse $\cG_M^\dagger$ is now bounded.
However, its norm is expected to increase with increasing~$M$. 
\begin{remark}
Assuming  $N(\cG_M^*)=\{0\}$, we obtain $\cJb(u)=0$ in analogy to Proposition~\ref{prop:DISCRIMINATORY}.
\end{remark}
\subsection{Fredholm networks} \label{subsec:FN}
The asymptotic coincidence with  discrete neural networks $F_N(\cdot, \eta)$ suggests to  consider their continuous counterpart
\begin{equation}\label{eq:FN}
\cG v = \frac{1}{|Q|}\int_Q \psi(\cdot,\eta) v(\eta) \; d\eta
\end{equation}
with  parameter function $v \in L^2(Q)$ directly for approximation.   
Observe that the  continuous Training Problems   \ref{prob:FFUNC} (function data) or \ref{prob:PPOINT} (point data)
associated with such  \emph{Fredholm networks} are linear,  in contrast to the original discrete Problem~\ref{prob:DISCOPT}.
However, linearity comes with the price of infinitely many unknown parameter function values.
This suggests suitable discretizations of the continuous Training Problem~\ref{prob:FFUNC} or \ref{prob:PPOINT} 
providing  computable approximations of  the corresponding Fredholm networks.
For example,  the integral operator $\cG$ has been replaced by a suitable Riemann sum in \cite{que2016back,que2014learning,wang2019semi}.

 {
\begin{remark}\label{rem:MEASURE}
In order to relax the constraints \eqref{eq:ADHOC}
on the parameters $\zeta$ that provide   an asymptotic  limit of $F_N(\cdot,\zeta)$ of the form \eqref{eq:SLLN},
one might assume  that the parameters $\zeta_i = (\eta_i, u_i)$ are independent random variables that are identically distributed 
with respect to some unknown  probability measure $\nu \in \mathbb{P}(Q\times \R)$.
Then, the law of large numbers implies
\[
\frac{1}{N} \sum_{i=1}^N   \psi (x,\eta_i)u_i \;  \to \;  \int_{Q \times \R} \psi(x,\eta)u \;d\nu(\eta,u) \quad \text{for } N \longrightarrow \infty
\]
in analogy to \eqref{eq:SLLN}. Replacing $F_N(x, \zeta)$ in the loss functional $\cJ_N$ by the  mean-field network~\cite{Nitanda2022mfld}
\[
 \mathbb{P}(Q\times \R) \ni \nu \mapsto \int_{Q\times \R} \psi(\cdot,\eta) u \; d \nu(\eta,u) \in \R,
\]
the resulting training problem for the  measure $\nu$ amounts to 
the least squares formulation of  the problem to find $\nu \in \mathbb{P}(Q \times \R)$ such that
\begin{equation} \label{eq:MF}
 \int_{Q \times \R} \psi(\cdot,\eta)u \; d\nu(\eta,u) = F,
\end{equation}
i.e., of a Fredholm integral equation on measures.  
Approximate solution  of related problems by associated  mean-field Langevin dynamics
has been discussed in~\cite{Nitanda2022mfld,mei2018mean,rotskoff2018neural,Hu2021mfld,chizat2018global,Chizat2022mfld}.
with entropy regularisers added in \cite{Nitanda2022mfld,mei2018mean,Hu2021mfld,Chizat2022mfld}.
\end{remark}
}
%
%
\section{Ritz-Galerkin discretization and regularization} \label{sec:R-GT}
\subsection{Variational formulation}
Let $L \subset L^2(Q)$ denote a closed subspace.
Then  the corresponding Ritz-Galerkin discretization
of the Fredholm Training Problems \ref{prob:FFUNC} and \ref{prob:PPOINT}  reads as follows.
\begin{problem} \label{prob:RGVAR}
Find  $u_L \in L$ such that
\begin{equation} \label{eq:GALERKIN}
 (\cG u_L, \cG v)_\pi = (F,\cG v)_\pi \qquad \forall v \in L,
\end{equation}
with $\pi$ still denoting the Lebesgue or  point measure.
\end{problem}
Again, existence of a best-approximate solution $u_L$  follows from Theorem~2.5 in \cite{Engl},
and asymptotic ill-conditioning is expected to be inherited from the continuous case.

\subsection{Algebraic formulation}
Let the ansatz space $L = L_n$ have finite dimension $n$ and let $\varphi_1, \dots, \varphi_n$ be a basis of $L_n$.
Then any $v \in L_n$ can be identified with the coefficient vector $V=(V_i)_{i=1}^n\in \R^n$ of its unique basis representation
\begin{equation} \label{eq:basis}
v= \sum_{i=1}^n V_i \varphi_i \in L_n.
\end{equation}
\begin{example} \label{ex:1}
Let $Q = \bigcup_{i=1}^n q_i$ be a decomposition of $Q$ into disjoint  
 {hypercubes} $q_i$ with maximal  {diameter} $h$. Then 
\[
\varphi_i(\eta) = \left\{ 
\begin{array}{cc}
|q_i|^{-1/2}&\text{if } \eta \in q_i\\
0& \text{otherwise}
\end{array}
\right .
\]
is an orthonormal basis of the subspace $L_n \subset L^2(Q)$ of piecewise constant functions on $q_i$,
and the coefficients $V_i$ of  $v\in L_n$  agree with its scaled values on $q_i$, $i=1,\dots,n$.
\end{example}
Introducing the semi-discrete, linear operators
\begin{equation}
\begin{array}{lclrll}
G &=& (\cG \varphi_1, \dots, \cG \varphi_n) & \in  L^2(X,\pi) \times \R^n   \colon &\R^n \to L^2(X,\pi),\\
G^*  &=& ((\cG \varphi_i, \cdot)_\pi)_{i=1}^n  & \in \R^n \times L^2(X,\pi) \colon & L^2(X,\pi) \to \R^n
\end{array}
\end{equation}
we have $\cG u_L = G U$ with $u_L = \sum_{i=1}^n U_i \varphi_i$ and  $U =(U_i)_{i=1}^n \in \R^n$.
Utilizing
\begin{equation} 
G^*G = ((\cG \varphi_i,\cG \varphi_j)_\pi)_{i,j=1}^n \in \R^{n,n}
\end{equation}
the variational equality \ref{eq:GALERKIN} can be equivalently rewritten as the linear system
\begin{equation} \label{eq:ALGPROBSD}
G^* G U = G^* F
\end{equation}
for the unknown coefficient vector $U$ of $u_L$.
Observe that   $G^*G = A^\top A$ holds with
\begin{equation} \label{eq:SM}
A = (a_{ij}) \in \R^{M\times n}, \;\; a_{ij} =  (\cG \varphi_j)(x_i), \;\;  i=1,\dots M, \; j=1,\dots n,\;
\end{equation}
in the case (\ref{eq:DATA}b) of given point data.

\subsection{Tikhonov regularization} \label{subsec:TIKHONOV}
We only consider the case (\ref{eq:DATA}b) of given point data.
In order to ensure uniqueness of approximate solutions, we select some $\varepsilon > 0$
and introduce the following Tikhonov regularization of the discretized Problem~ \ref{prob:RGVAR}.
\begin{problem} \label{prob:ALGPROB}
Find a solution $U_\varepsilon \in \R^n$ of the regularized linear system
\begin{equation} \label{eq:ALGPROB}
(\varepsilon I + A^\top A) U_{\varepsilon} = A^\top b
\end{equation}
with identity matrix $I \in \R^{n\times n}$, $b = (F(x_i))_{i=1}^M \in \R^M$, and $\varepsilon >0$.
\end{problem}

Problem \ref{prob:ALGPROB} is equivalent to minimizing the regularized discrete loss functional
\begin{equation} \label{eq:REGLI}
{\mathbb J}_{\varepsilon}(V)= \lVert AV - b \rVert^2_{2,\R^M} + \varepsilon \lVert V \rVert^2_{2,\R^n}, \quad V \in \R^{n}.
\end{equation}
with $\lVert\,\cdot\,\rVert_{2,\R^M}$ and $\lVert\,\cdot\,\rVert_{2,\R^n}$ denoting the Euclidean norm in $\R^M$ and $\R^n$, respectively.
Existence and uniqueness of a solution is an immediate consequence of the Lax-Milgram lemma.

Moreover, Problem \ref{prob:ALGPROB} is also equivalent  with the algebraic formulation 
of the Ritz-Galerkin discretization of the regularized continuous problem
to find $u_\varepsilon \in L^2(Q)$ such that
\begin{equation} \label{eq:TICONT}
J_\varepsilon (u_\varepsilon) \leq J_\varepsilon (v)\qquad \forall v \in L^2 (Q)
\end{equation}
with regularized loss functional
\[
J_{\varepsilon}(v) =  \varepsilon (v,v) + \cJb(v) =   \varepsilon (v,v)  +  \frac{1}{M} \sum_{j=1}^M (F(x_j) - \cG v(x_j))^2
\]
provided that $\varphi_1, \dots, \varphi_n$ is an orthonormal basis of $L_n$ (see, e.g., Example~\ref{ex:1}).
Indeed, we have  ${\mathbb J}_{\varepsilon}(V) = J_{\varepsilon}(v)$ for $v= \sum_{i=1}^n V_i \varphi_i $ in this case.

 {
We conclude this section by a residual error estimate  for the loss functional,
as obtained for the ansatz space $L_n$ of piecewise constant functions introduced in Example~\ref{ex:1}. 
Let $U_{\varepsilon, h}\in \R^n$ denote the  solution of the corresponding Tikhonov regularized Problem~\ref{prob:ALGPROB}, and  set
\[
u_{\varepsilon, h} = \sum_{i=1}^n \left(U_{\varepsilon, h}\right)_i \varphi_i \in L_n \subset L^2(Q).
\]
\begin{proposition} \label{prop:RESERR}
Assume that the kernel function $\psi$ satisfies \eqref{eq:BNDDI} and $N(\cG_M^*)= \{0\}$. 
Assume further  that the solution $u \in L^2(Q)$ of Problem~\ref{prob:PPOINT} satisfies the smoothness condition $u \in R(\cG_M^* \cG_M)$ 
and that the solution $u_{\varepsilon}$ of the regularized problem \eqref{eq:TICONT} satisfies the regularity condition $u_{\varepsilon} \in H^1(Q)$. 
Then the  a priori residual estimate
\begin{equation}
0 \leq J(u_{\varepsilon, h})\leq \displaystyle C \left(\varepsilon^2 + \varepsilon^{-2}h^2\right)
\end{equation}
for the continuous loss functional $J$ introduced in \eqref{eq:COLO} holds with a constant $C$  depending only on $u$ and $u_{\varepsilon}$.
\end{proposition}
\begin{proof}
The minimization problem \eqref{eq:TICONT}  can be equivalently written in variational form
\[
a(u_{\varepsilon},v) = \ell(v) \quad \forall v \in L^2(Q)
\]
with bilinear form $a(\cdot, \cdot)$ and linear functional $\ell$ given by
\[
a(v,w)=\varepsilon (v,w) + \frac{1}{M}\sum_{i=1}^M \cG_M v(x_i) \cG_M w(x_i), \qquad \ell(v) =  \frac{1}{M}\sum_{i=1}^M F (x_i) \cG_M v(x_i).
\]
We have $\ell \in L^2(Q)'$ and  boundedness 
\[
|a(v,w)| \leq c \|v\|\|w\| \quad \forall v,  w \in L^2(Q)
\]
with a constant $c \geq \varepsilon + \|\cG_M\|^2$ independent of $M$ follows from uniform boundedness of $\cG_M$ (see Subsection~\ref{subsec:PD}).
Hence, C\'ea's lemma and  well-known results on  Poincar\'e inequalities on convex domains, cf., e.g.~\cite{bebendorf2003note}[Theorem 3.2]  imply
the discretization error bound 
\begin{equation} \label{eq:CEA}
\|u_{\varepsilon} - u_{\varepsilon, h}\| \leq c\; \varepsilon^{-1} \inf_{v \in L_n}\|u_{\varepsilon}- v\| \leq c\pi^{-1}\; \varepsilon^{-1} h \|u_{\varepsilon}\|_{H^1(Q)}
\end{equation}
The regularization error bound
\begin{equation} \label{eq:REGULERR}
\|u- u_{\varepsilon}\| \leq c\;  \varepsilon
\end{equation}
with another constant $c$ depending only on $u$ is a consequence of \cite{Engl}[Theorem 4.3].
The desired residual error estimate 
\[
J(u_{\varepsilon, h})\leq \left( \|\cG_M\| \|u- u_{\varepsilon}\| +  \|\cG_M\| \|u_{\varepsilon} - u_{\varepsilon,h}\|  \right)^2
\leq  c \left( \varepsilon +  \varepsilon^{-1}h \right)^2 \leq 2 c \left(  \varepsilon^2 +   \varepsilon^{-2}h^2 \right)
\]
with another constant $c$  depending only on $u$ and $u_{\varepsilon}$
now follows from $J(u)=0$, the uniform boundedness of $\cG_M$, and the error estimates  \eqref{eq:CEA} and \eqref{eq:REGULERR}.
\end{proof}
}


%
%
\section{Algebraic solution by tensor trains} \label{sec:TT}

In this section we consider the algebraic solution of the regularized linear system \eqref{eq:ALGPROB} in Problem~\ref{prob:ALGPROB}
for high dimension $d$ and resulting large number of unknowns $n$ by tensor train methods.

\subsection{Tensor formats}

Tensors are multilinear mappings that can be represented as functions $T \colon \{1,\dots, D_1\} \times \dots \times \{1, \dots, D_\ord\} \to \Cc$
or, equivalently,  as multidimensional arrays $\Tt \in \Cc^{D_1 \times \dots \times D_\ord}$ with dimensions or modes $D_k \in \N$, $k=1,\dots, \ord$, and order $\ord$ of $\Tt$.
We will write $\Tt \in \R^{D_1 \times \dots \times D_\ord}$, if all components $\Tt_{i_1, \dots, i_\ord}$,  $i_k = 1, \dots, D_k$, $k=1,\dots, \ord$, are real numbers.
Throughout the following, tensors are denoted by bold letters.
When certain indices of a tensor are fixed, we use colon notation (cf.~Python or Matlab) to indicate the remaining modes, 
e.g., $\Tt_{:,i_2, \dots, i_{\ord-1},:} \in \Cc^{D_1 \times D_\ord}$.

The sum of tensors and the product with scalars is defined componentwise.
Extending  usual matrix multiplication, the product or index contraction $\Tt \cdot \Ut \in \Cc^{D_1 \times \dots \times D_\ord \times D_1'' \times \dots \times D_{\ord''}'' }$ of two tensors
$\Tt \in \Cc^{D_1 \times \dots \times D_\ord \times D_1' \times \dots \times D_{\ord '}'}$ and $\Ut \in \Cc^{D_1' \times \dots \times D_{\ord'}' \times D_1'' \times \dots \times D_{\ord''}'' }$   is defined by 
\[
(\Tt  \cdot \Ut)_{i_1, \dots, i_\ord, j_1, \dots, j_{\ord''}} = \sum_{k_1=1}^{D_1'} \dots \sum_{k_{\ord '}=1}^{D_{\ord'}'} \Tt_{i_1, \dots,i_\ord, k_1, \dots, k_{\ord'}} \Ut_{k_1, \dots, k_{\ord'},  j_1, \dots, j_{\ord''}}.
\]
Note that the sum of the tensors $\Tt_{\ell} \in \Cc^{D_1 \times \cdots \times D_{\ord}}$, $\ell=1,\dots,r$, can be regarded as an index contraction of 
$ \Tt \in \Cc^{r \times D_1 \times \cdots \times D_\ord}$ with $\Tt_{\ell, i_1,\dots, i_\ord}= (\Tt_{\ell})_{i_1,\dots, i_\ord}$ 
and ${\mathbf 1} \in \R^{r}$ with ${\mathbf 1}_{\ell} =1$, $\ell=1, \dots, r$.
We will also make use of the tensor product $\Tt \otimes \Ut \in \Cc^{D_1 \times \dots \times D_\ord \times D_1' \times \dots \times D_{\ord'}' }$
of $\Tt \in \Cc^{D_1 \times \dots \times D_\ord}$ and $\Ut \in \Cc^{D_1' \times \dots \times D_{\ord'}' }$ which is defined by
\[
(\Tt \otimes \Ut)_{i_1, \dots,i_\ord, j_1, \dots, j_{\ord'}} = \Tt_{i_1, \dots,i_\ord} \Ut_{j_1, \dots, j_{\ord'}}
\]
and coincides with the dyadic product  $\Tt \Ut^\top \in \Cc^{D_1\times D_1'}$ of two vectors $\Tt \in \Cc^{D_1}$ and $\Ut \in \Cc^{D_1'}$ for $\ord = \ord' =1$.

As the number of elements of a tensor  grows exponentially with order $\ord$, the storage of high-dimensional tensors may become infeasible, if $\ord$ becomes large.
This motivates suitable formats for tensor representation and approximation.
Obviously, the storage of a \emph{rank-one tensor}  $\Tt\in \Cc^{D_1 \times \dots \times D_\ord}$ that  can be written as the tensor product 
\begin{equation} \label{eq:R1}
	\Tt = \Tt^{(1)} \otimes \dots \otimes \Tt^{(\ord)}
\end{equation}
of $\ord$ vectors $\Tt^{(k)} \in \Cc^{D_k}$, $k=1, \dots , \ord$, only grows linearly with $\ord D_{\text{max}}$ and  $D_{\text{max}} = \max \{D_1, $ $ \dots, D_\ord\}$.
As a generalization of \eqref{eq:R1}, a tensor   $\Tt\in \Cc^{D_1 \times \dots \times D_\ord}$ is said to be in  the \emph{canonical format}~\cite{hitchcock1927expression}, 
if it can be expressed as a finite sum of rank-one tensors according to
\begin{equation} \label{eq:CT}
	\Tt = \sum_{\ell=1}^r \Tt^{(1)}_{\ell,:} \otimes \dots \otimes \Tt^{(\ord)}_{\ell,:}.
\end{equation}
Here,  the tensors $\Tt^{(k)} \in \Cc^{r \times D_k}$, $k=1, \dots, \ord$, are called cores and
the number $r$ is called the canonical rank of $\Tt$.
Note that the required storage grows with order $\cO(r \ord D_{\text{max}})$.

The \emph{tensor train} (TT) format~\cite{oseledets2009breaking, OSELEDETS2011} is a further generalization of \eqref{eq:CT}.
Here, the basic idea  is to decompose a tensor into a chain-like network of low-dimensional tensors coupled by so-called TT ranks $r_0, \dots r_{\ord}$ according to
\begin{equation}\label{eq: TT}
	\Tt = \sum_{\ell_0=1}^{r_0} \dots \sum_{\ell_\ord=1}^{r_\ord}  \Tt^{(1)}_{\ell_0,:, \ell_1} \otimes \dots \otimes \Tt^{(\ord)}_{\ell_{\ord-1},:, \ell_\ord},
\end{equation}
with  TT cores $\Tt^{(k)} \in \Cc^{r_{k-1} \times D_k \times r_k}$, $k=1,\dots, \ord$, and $r_0 = r_\ord = 1$.
With maximal TT rank  $r_{\text{max}}= \max \{r_0,  \dots, r_\ord\}$,
the storage consumption for tensors in the TT format grows like $\cO(r^2_{\text{max}} \ord D_{\text{max}})$
and thus is still linearly bounded in terms of the maximal dimension  $D_{\text{max}}$ and the order $ \ord$ of $\Tt$.
The ranks $r_0,  \dots, r_\ord$ not only determine the required storage, 
but also have a strong influence on the quality of best approximation of a given tensor in full format by a tensor train.
Note that such a best approximation in TT format with bounded ranks always exists~\cite{falco2012minimal} 
and is even exact for suitably chosen ranks~\cite[Theorem 1]{holtz2012manifolds}.
For further information about tensor formats, we refer, e.g., to~\cite{kolda2009tensor, hackbusch2012tensor}.

In order to provide a factorization of the integral operator $\cG$ by factorization of the kernel $\psi$, we will make use of functional tensor networks with discrete as well as continuous modes. 
More precisely, a functional tensor with one discrete and one continuous mode is a function $T \colon \{1, \dots, D\} \times Q \to \Cc$, where $Q\subseteq \R$ 
denotes the domain of the continuous mode.
We write $\Tt_{i, \eta} = T(i, \eta)$, $i=1, \dots, D$, $\eta \in Q$, for the components of the corresponding generalized array $\Tt \in \Cc^{D\times Q}$.
Replacing summation by integration, index contraction $\Tt \cdot \Ut \in \Cc^{D\times D'}$ of $\Tt$ with the functional tensor $\Ut\in \Cc^{Q\times D'}$ is defined by
\[
(\Tt \cdot \Ut)_{i,j} = \int_Q \Tt_{i,\eta}\Ut_{\eta ,j}\; d\eta.
\]
Functional tensor trains  (FTTs)~\cite{GORODETSKY2019}  provide a decomposition of 
$\Tt \in \Cc^{Q_1 \times \cdots \times Q_\ord}$  into  cores $\Tt^{(k)}\in \Cc^{r_{k-1}\times Q_k \times r_k}$ in analogy to \eqref{eq: TT}.

Figure~\ref{fig: formats} shows the diagrammatic notation of tensors in rank-one, canonical, TT, and FTT format. 
We depict vectors, matrices, and tensors as circles with different arms indicating the set of modes.
Index contraction of two or more tensors is represented by connecting corresponding arms.

\def\g{0.9}
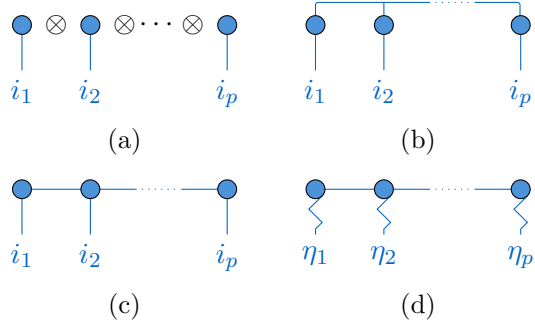
\begin{figure}[htb]
	\centering
	\begin{subfigure}[b]{0.24\textwidth}
		\centering
		\begin{tikzpicture}
			\node[] at (0,-1) {};
			\node[] at (0.5*\g,0) {$\otimes$};
			\node[] at (1.5*\g,0) {$\otimes$};
			\node[] at (2*\g,0) {$\cdots$};
			\node[] at (2.5*\g,0) {$\otimes$};
			\draw[black] (0,0) --++ (0,-0.6) node [label={[shift={(0,-0.65)},text depth=0]$i_1$}] {};
			\draw[black] (\g,0) --++ (0,-0.6) node [label={[shift={(0,-0.65)},text depth=0]$i_2$}] {};
			\draw[black] (3*\g,0) --++ (0,-0.6) node [label={[shift={(0,-0.65)},text depth=0]$i_\ord$}] {};
			\node[draw,shape=circle,fill=black!70, inner sep=2.5pt] at (0,0){};
			\node[draw,shape=circle,fill=black!70, inner sep=2.5pt] at (\g,0){};
			\node[draw,shape=circle,fill=black!70, inner sep=2.5pt] at (3*\g,0){};
		\end{tikzpicture}
		\caption*{(a)}
	\end{subfigure}
	\begin{subfigure}[b]{0.24\textwidth}
		\centering
		\begin{tikzpicture}
			\node[] at (0,-1) {};
			\draw[black, rounded corners=1pt] (0,0) --++ (0,0.3) --++ (1.66*\g,0);
			\draw[black, dotted] (1.66*\g,0.3) -- ++ (0.66*\g,0) ;
			\draw[black, rounded corners=1pt] (2.33*\g,0.3) --++ (0.66*\g,0) --++ (0,-0.3);
			\draw[black] (\g,0) --++ (0,0.3) ;
			\draw[black] (0,0) --++ (0,-0.6) node [label={[shift={(0,-0.65)},text depth=0]$i_1$}] {};
			\draw[black] (\g,0) --++ (0,-0.6) node [label={[shift={(0,-0.65)},text depth=0]$i_2$}] {};
			\draw[black] (3*\g,0) --++ (0,-0.6) node [label={[shift={(0,-0.65)},text depth=0]$i_\ord$}] {};
			\node[draw,shape=circle,fill=black!70, inner sep=2.5pt] at (0,0){};
			\node[draw,shape=circle,fill=black!70, inner sep=2.5pt] at (\g,0){};
			\node[draw,shape=circle,fill=black!70, inner sep=2.5pt] at (3*\g,0){};
		\end{tikzpicture}
		\caption*{(b)}
	\end{subfigure}\\[0.25cm]
	\begin{subfigure}[b]{0.24\textwidth}
	\centering
		\begin{tikzpicture}
			\node[] at (0,-1) {};
			\draw[black] (0,0) -- ++ (\g,0) ;
			\draw[black] (\g,0) -- ++ (0.66*\g,0) ;
			\draw[black, dotted] (1.66*\g,0) -- ++ (0.66*\g,0) ;
			\draw[black] (2.33*\g,0) -- ++ (0.66*\g,0) ;
			\draw[black] (0,0) --++ (0,-0.6) node [label={[shift={(0,-0.65)},text depth=0]$i_1$}] {};
			\draw[black] (\g,0) --++ (0,-0.6) node [label={[shift={(0,-0.65)},text depth=0]$i_2$}] {};
			\draw[black] (3*\g,0) --++ (0,-0.6) node [label={[shift={(0,-0.65)},text depth=0]$i_\ord$}] {};
			\node[draw,shape=circle,fill=black!70, inner sep=2.5pt] at (0,0){};
			\node[draw,shape=circle,fill=black!70, inner sep=2.5pt] at (\g,0){};
			\node[draw,shape=circle,fill=black!70, inner sep=2.5pt] at (3*\g,0){};
		\end{tikzpicture}
	\caption*{(c)}
	\end{subfigure}
	\begin{subfigure}[b]{0.24\textwidth}
		\centering
		\begin{tikzpicture}
			\node[] at (0,-1) {};
			\draw[black] (0,0) -- ++ (\g,0) ;
			\draw[black] (\g,0) -- ++ (0.66*\g,0) ;
			\draw[black, dotted] (1.66*\g,0) -- ++ (0.66*\g,0) ;
			\draw[black] (2.33*\g,0) -- ++ (0.66*\g,0) ;
			\draw[black,decoration={zigzag}, decorate] (0,0) --++ (0,-0.6) node [label={[shift={(0,-0.6)},text depth=0]$\eta_1$}] {};
			\draw[black,decoration={zigzag}, decorate] (\g,0) --++ (0,-0.6) node [label={[shift={(0,-0.6)},text depth=0]$\eta_2$}] {};
			\draw[black,decoration={zigzag}, decorate] (3*\g,0) --++ (0,-0.6) node [label={[shift={(0,-0.6)},text depth=0]$\eta_\ord$}] {};
			\node[draw,shape=circle,fill=black!70, inner sep=2.5pt] at (0,0){};
			\node[draw,shape=circle,fill=black!70, inner sep=2.5pt] at (\g,0){};
			\node[draw,shape=circle,fill=black!70, inner sep=2.5pt] at (3*\g,0){};
		\end{tikzpicture}
		\caption*{(d)}
	\end{subfigure}
	\caption{Graphical notation of tensor formats: 
	(a) A tensor in rank-one format, given by the tensor product of $\ord$ vectors. 
	(b) A tensor in the canonical format, given by the contraction of $\ord$ matrices on the common rank index. 
	(c)~A tensor in TT format, given by a network of pairwise coupled tensors. 
	Here, the first and the last TT core are regarded as matrices, because $r_0 = r_\ord = 1$. 
	(d) A tensor in FTT format, given by a TT-like network of tensors with one continuous mode. Discrete modes are represented by straight lines and continuous modes by zigzag lines.}
	\label{fig: formats}
\end{figure}


\subsection{Tensor decomposition of basis and stiffness matrix}\label{sec: tensor factorization}
As a first step towards a tensor train formulation of the regularized linear system \eqref{eq:ALGPROB}, we assume that the basis functions $\varphi_1, \dots, \varphi_n$ of the ansatz space $L_n$ utilized in \eqref{eq:basis}  are separable in the sense that
\begin{equation} \label{eq:PROD}
	\varphi_j(\eta) = \varphi^{(1)}_{j_1} (\eta_1) \cdot \ldots \cdot \varphi^{(s)}_{j_s} (\eta_s), \quad \eta=(\eta_1,\dots, \eta_s) \in Q = Q_1 \times \cdots \times Q_s,
\end{equation}
holds with $1 \leq j_k \leq n_k$ and some enumeration
\begin{equation} \label{eq:ENUM}
j= j(j_1, \dots, j_s) = 1,\dots, n= n_1\cdot \ldots \cdot n_s, \qquad j_k = 1,\dots, n_k,\quad k=1,\dots,s.
\end{equation}
\begin{example}
Let $Q_k = \bigcup_{j=1}^{n_k} q_j^{(k)}$ be a decomposition of $Q_k \subset \R^{n_k}$ into disjoint intervals $q_j^{(k)}\subset \R$, $j = 1,\dots, n_k$.
Then the piecewise constant basis functions  $\varphi_1, \dots, \varphi_n$ introduced in Example~\ref{ex:1} 
can be written as a product of the form \eqref{eq:PROD} with
\[
\varphi_{j}^{(k)}(\eta_k)= \left\{ 
\begin{array}{cc}
|q_j^{(k)}|^{-1/2}&\text{if } \eta_k \in q_j^{(k)}\\
0& \text{otherwise}
\end{array}
\right .
.
\]
\end{example}

By definition of the tensor product, we have the following proposition.
\begin{proposition} \label{prop:BASISPROD}
Assume that the basis functions $\varphi_1, \dots, \varphi_n$ of the ansatz space $L_n$ are separable in the sense of \eqref{eq:PROD}.
Then the functional tensor 
\[
{\mathbf \Phi} \in \R^{n_1 \times \cdots \times n_s \times Q_1 \times \cdots \times Q_s}, \qquad
\mathbf{\Phi}_{j_1, \dots, j_s, \eta_1 , \dots, \eta_s } =  \varphi^{(1)}_{j_1} (\eta_1) \cdot \ldots \cdot \varphi^{(s)}_{j_s} (\eta_s),
\]
is a  rank-one tensor with the decomposition
\begin{equation} \label{eq:BASDEC}
\mathbf{\Phi} = \mathbf{\Phi}^{(1)} \otimes \cdots \otimes  \mathbf{\Phi}^{(s)} 
\end{equation}
into  $\mathbf{\Phi}^{(k)} \in \R^{n_k \times Q_k}$ defined by  $\mathbf{\Phi}^{(k)}_{j_k, \eta_k}= \varphi_{j_k}^{(k)}(\eta_k)$, $j_k = 1,\dots, n_k$, for $k=1,\dots, s$.
\end{proposition}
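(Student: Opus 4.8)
The plan is to prove the statement by directly unwinding the componentwise definition of the tensor product introduced in Section~\ref{sec:TT}, read with continuous modes in place of (or alongside) discrete ones. Each factor $\mathbf{\Phi}^{(k)} \in \R^{n_k \times Q_k}$ is a functional tensor with exactly one discrete mode (ranging over $j_k = 1, \dots, n_k$) and one continuous mode (ranging over $\eta_k \in Q_k$), with components $\mathbf{\Phi}^{(k)}_{j_k,\eta_k} = \varphi^{(k)}_{j_k}(\eta_k)$. The separability assumption \eqref{eq:PROD} is then precisely the pointwise factorization that the tensor product is built to encode, so the statement should follow by an elementary verification rather than any substantial argument.

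First I would recall that the tensor product acts componentwise as a product of the factors' entries, and apply this iteratively to $\mathbf{\Phi}^{(1)}, \dots, \mathbf{\Phi}^{(s)}$. A clean way to organize the bookkeeping is induction on $s$: the base case $s=1$ is the identity $\mathbf{\Phi}^{(1)}_{j_1,\eta_1} = \varphi^{(1)}_{j_1}(\eta_1)$, and the inductive step multiplies the running product by the next factor $\mathbf{\Phi}^{(k)}_{j_k,\eta_k} = \varphi^{(k)}_{j_k}(\eta_k)$. After $s$ steps one obtains
\[
\left(\mathbf{\Phi}^{(1)} \otimes \cdots \otimes \mathbf{\Phi}^{(s)}\right)_{j_1, \eta_1, \dots, j_s, \eta_s} = \prod_{k=1}^s \varphi^{(k)}_{j_k}(\eta_k),
\]
which by \eqref{eq:PROD} equals $\varphi_{j(j_1,\dots,j_s)}(\eta)$, i.e. exactly the prescribed components of $\mathbf{\Phi}$. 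That the resulting object is rank-one is then immediate, since by construction it is a single tensor product of the $s$ cores, matching the defining form \eqref{eq:R1}.

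The one point requiring a word of care, and the only genuine obstacle although a purely notational one, is the ordering of the modes. The tensor product as defined concatenates the modes of its factors in sequence, so that $\mathbf{\Phi}^{(1)} \otimes \cdots \otimes \mathbf{\Phi}^{(s)}$ naturally carries its indices in the interleaved order $(j_1, \eta_1, \dots, j_s, \eta_s)$, whereas the statement lists the modes of $\mathbf{\Phi}$ with all discrete indices first and all continuous indices afterwards, $(j_1, \dots, j_s, \eta_1, \dots, \eta_s)$, consistent with the stated codomain $\R^{n_1 \times \cdots \times n_s \times Q_1 \times \cdots \times Q_s}$. I would therefore make explicit the harmless convention that the discrete and continuous modes are grouped separately, so that the identification of the two tensors is understood up to this fixed permutation of modes; the scalar value of each component is unaffected. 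With this convention fixed, the proof reduces to the elementary verification above.
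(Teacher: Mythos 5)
Your proof is correct and coincides with the paper's own treatment: the paper states this proposition with no argument beyond the remark that it holds ``by definition of the tensor product,'' which is exactly the componentwise verification you carry out. Your explicit note about regrouping the interleaved modes $(j_1,\eta_1,\dots,j_s,\eta_s)$ into $(j_1,\dots,j_s,\eta_1,\dots,\eta_s)$ makes precise a convention the paper uses silently (also in Proposition~\ref{prop:KERNEL}), so nothing is missing.
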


In the next step, we derive a decomposition of the functional tensor  $\mathbf{\Psi} \in \R^{M \times Q_1 \times \cdots \times Q_s}$ with components
\begin{equation} \label{eq:FUNTENS}
	\mathbf{\Psi}_{i, \eta_1, \dots, \eta_s} = \psi(x_i, \eta),  \qquad  i =1, \dots, M, \quad \eta = (\eta_1, \dots, \eta_s)^\top \in Q= Q_1 \times \cdots \times Q_s,
\end{equation} 
associated with the actual kernel function $\psi$ and the given data points $x_i\in X$, $i=1, \dots, M$.
For ease of presentation,  we focus on kernels $\psi \colon X \times Q \to \R$ with $X, \;Q \subset \R^s$.
Obviously,  radial kernel functions  \eqref{eq:RADIAL} have this property with  $s=d$,
and in the case of ridge kernel functions   \eqref{eq:RIGDE} we can identify $X$ with its  embedding $\{(x,1)^\top\;|\; x \in X\}\subset \R^s$, $s=d+1$.

We further assume that the actual kernel function $\psi$ can be expressed (or approximated) 
as a sum of products of univariate functions according to
\begin{equation} \label{eq:FACKER}
	\psi(x, \eta) = \sum_{\ell=1}^r \psi^{(1)}_{\ell} (x_1, \eta_1) \cdot \ldots \cdot \psi^{(s)}_{\ell} (x_s, \eta_s).
\end{equation}
\begin{example} \label{ex:FACKER}
The ridge kernel function $\psi(x,\eta)= \sigma (x \cdot \eta)$
with trivial activation function  $\sigma = \text{Id}$ can be written in the form \eqref{eq:FACKER}
with $r=s$ and $\psi^{(k)}_{\ell} (x_k, \eta_k) = x_k \eta_k$ for $k=\ell$ and  $\psi^{(k)}_{\ell} (x_k, \eta_k) = 1$ otherwise.
\end{example}
Explicit constructions of approximate factorizations of ridge kernels with more relevant activation functions can be found in Appendix~\ref{app}.
Note that even in the case of real-valued kernel functions  $\psi$, the factors  $\psi^{(k)}_{\ell}$ might have complex values (see, e.g. \ref{A:HEAVI}).
\begin{example} \label{ex:RADIAL}
The radial kernel function with Gaussian activation function given in \eqref{eq:GAUSSAC}
takes the form \eqref{eq:FACKER} with $r=1$ and 
\[
\psi^{(k)}_{1} (x_k, \eta_k) = \exp\left(-\frac{ (x_k - \eta_k)^2}{2\kappa^2}\right),\quad k=1,\dots,d.
\]
\end{example}

Utilizing the given  data points $x_i =(x_{i,k})_{k=1}^s \in X$, $i=1, \dots,M$,
we now introduce the auxiliary functional tensor $\hat{\mathbf{\Psi}} \in \Cc^{M^s \times Q_1 \times \cdots \times Q_s}$ with  canonical decomposition 
\begin{equation} \label{eq:HELP}
	\hat{\mathbf{\Psi}} = \sum_{\ell=1}^r \hat{\mathbf{\Psi}}^{(1)}_{\ell, :, :} \otimes \dots \otimes \hat{\mathbf{\Psi}}^{(s)}_{\ell, :, :}
\end{equation}
into cores $\hat{\mathbf{\Psi}}^{(k)} \in \Cc^{r \times M \times Q_k}$ with components 
$\hat{\mathbf{\Psi}}^{(k)}_{\ell, i, \eta_k} = \psi^{(k)}_\ell (x_{i,k}, \eta_k)$.
\begin{proposition} \label{prop:KERNEL}
Assume that the  kernel function  $\psi$ allows for the representation \eqref{eq:FACKER}.
 Then  the  functional tensor $\mathbf{\Psi} \in \Cc^{M \times Q_1 \times \cdots \times Q_s}$ can be decomposed according  to
 \begin{equation} \label{eq:KERDEC}
 \mathbf{\Psi} = \Delta \cdot \hat{\mathbf{\Psi}}
 \end{equation}
 with the delta tensor $\Delta \in \R^{M \times M^{s}}$ defined by 
\begin{equation} \label{eq:DELTA}
\Delta_{i, j_1, \dots, j_s} =   \delta_{i,j_1} \cdot \ldots \cdot \delta_{i,j_s}
\end{equation}
and $\delta_{i,j_k}$ denoting  the Kronecker delta.
\end{proposition}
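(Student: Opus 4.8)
The plan is to verify the claimed identity \eqref{eq:KERDEC} componentwise: I would evaluate both sides at an arbitrary index $(i,\eta_1,\dots,\eta_s)$ and show they agree, the only substantial input being the factorization \eqref{eq:FACKER}.

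First I would record the entries of the auxiliary tensor $\hat{\mathbf{\Psi}}$ obtained from its canonical decomposition \eqref{eq:HELP}. By the definition of the tensor product together with the prescribed cores $\hat{\mathbf{\Psi}}^{(k)}_{\ell,i,\eta_k} = \psi^{(k)}_\ell(x_{i,k},\eta_k)$, these read
\[
\hat{\mathbf{\Psi}}_{j_1,\dots,j_s,\eta_1,\dots,\eta_s} = \sum_{\ell=1}^r \psi^{(1)}_\ell(x_{j_1,1},\eta_1)\cdots\psi^{(s)}_\ell(x_{j_s,s},\eta_s),
\]
where the $s$ discrete indices $j_1,\dots,j_s$ each range over $\{1,\dots,M\}$ and, crucially, may be chosen independently of one another, so that the $k$-th factor is in general evaluated at a different sample point than the others.

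Next I would apply the definition of the index contraction to the right-hand side of \eqref{eq:KERDEC}. Since $\Delta \in \R^{M\times M^s}$ carries one free mode and $s$ contracted modes (placed last), while $\hat{\mathbf{\Psi}} \in \Cc^{M^s\times Q_1\times\cdots\times Q_s}$ carries its $s$ discrete modes first, the contraction is admissible and yields
\[
(\Delta\cdot\hat{\mathbf{\Psi}})_{i,\eta_1,\dots,\eta_s} = \sum_{j_1=1}^M\cdots\sum_{j_s=1}^M \Delta_{i,j_1,\dots,j_s}\,\hat{\mathbf{\Psi}}_{j_1,\dots,j_s,\eta_1,\dots,\eta_s}.
\]
The key step is then to exploit the product-of-Kronecker-deltas structure \eqref{eq:DELTA}: the factor $\Delta_{i,j_1,\dots,j_s}=\delta_{i,j_1}\cdots\delta_{i,j_s}$ vanishes unless $j_1=\cdots=j_s=i$, so all but the single diagonal term drop out and the multiple sum collapses to $\hat{\mathbf{\Psi}}_{i,\dots,i,\eta_1,\dots,\eta_s}$.

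Finally I would substitute $j_1=\cdots=j_s=i$ into the component formula for $\hat{\mathbf{\Psi}}$ and recognize, via \eqref{eq:FACKER} applied at $x=x_i$ (so that $x_{i,k}$ and $\eta_k$ are the $k$-th coordinates of $x_i$ and $\eta$), that the result equals $\psi(x_i,\eta)=\mathbf{\Psi}_{i,\eta_1,\dots,\eta_s}$, which proves \eqref{eq:KERDEC}. I do not anticipate any genuine difficulty: the proposition is essentially a bookkeeping statement asserting that contracting with $\Delta$ restricts the $s$ formally independent sample indices of $\hat{\mathbf{\Psi}}$ to their common diagonal value $i$, thereby converting the separated cores into the honest kernel evaluation at the one point $x_i$. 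The only thing requiring care is to keep the $s$ discrete modes of $\hat{\mathbf{\Psi}}$ (each of size $M$) notationally distinct from the single output mode of $\mathbf{\Psi}$ and to match them correctly against the contracted modes of $\Delta$.
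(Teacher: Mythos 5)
Your proposal is correct and follows essentially the same argument as the paper: both verify \eqref{eq:KERDEC} componentwise by expanding the contraction with $\Delta$, using the product of Kronecker deltas to collapse the multiple sum to the diagonal term $j_1=\cdots=j_s=i$, and then invoking the factorization \eqref{eq:FACKER} at $x=x_i$ to identify the result with $\mathbf{\Psi}_{i,\eta_1,\dots,\eta_s}$. The only cosmetic difference is that the paper interchanges the sum over $\ell$ with the sums over $j_1,\dots,j_s$ and collapses each index sum factorwise, whereas you collapse the whole multiple sum at once; the two computations are identical in substance.
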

\begin{proof} The desired identity \eqref{eq:KERDEC} follows from
\begin{align*}
	(\Delta \cdot \hat{\mathbf{\Psi}})_{i, \eta_1, \dots, \eta_s}  &= \sum_{j_1=1}^M \dots \sum_{j_s=1}^M \Delta_{i, j_1, \dots , j_s} \hat{\mathbf{\Psi}}_{j_1, \dots, j_s,  \eta_1, \dots,\eta_s} \\
	&= \sum_{\ell=1}^r \left( \sum_{j_1=1}^M \delta_{i,j_1} \hat{\mathbf{\Psi}}^{(1)}_{\ell, j_1, \eta_1} \right) \cdots \left( \sum_{j_s=1}^M \delta_{i,j_s} \hat{\mathbf{\Psi}}^{(s)}_{\ell, j_s, \eta_s} \right) \\
	&= \sum_{\ell=1}^r \hat{\mathbf{\Psi}}^{(1)}_{\ell, i, \eta_1} \cdots \hat{\mathbf{\Psi}}^{(s)}_{\ell, i, \eta_s} 
	= \sum_{\ell=1}^r \psi^{(1)}_{\ell} (x_{i,1}, \eta_1) \cdots \psi^{(s)}_{\ell} (x_{i,s}, \eta_s) = \mathbf{\Psi}_{i, \eta_1, \dots, \eta_s}.\qedhere
\end{align*}

\end{proof}
The construction of  the functional tensor $\mathbf{\Psi}$ is illustrated in Figure~\ref{fig: kernel decomposition}~(a).
Note that  the auxiliary tensor $\Delta$ does not explicitly appear in our numerical computations.
We will also exploit the FTT format to represent the auxiliary tensor $\hat{\Psit}$, if it allows for a low-rank decomposition.
In particular, monomials of the form $(x \cdot \eta)^{p}$ can be expressed in a compact way using a tensor train-like coupling as we show in Appendix~\ref{app}.
Figure~\ref{fig: kernel decomposition}~(b) shows the corresponding FTT network for $\Psit$.

Now we are in the position to provide a representation of the stiffness matrix $A=(a_{ij}) \in \R^{M \times n}$ defined in \eqref{eq:SM} as a tensor network.

\begin{figure}[htb]
	\centering
	\begin{subfigure}[b]{0.49\textwidth}
		\centering
		\begin{tikzpicture}
			\def\s{0.35}
			\draw[black, rounded corners = 1pt] (0,0) --++ (\s,\s) --++ (3.5,0);
			\draw[black] (2,0) --++ (\s,\s);
			\draw[black, dotted] (3.5+\s,\s) --++ (1,0);
			\draw[black, rounded corners = 1pt] (4.5+\s,\s) --++ (1.5,0) --++ (-\s,-\s);
			
			\draw[black,decoration={zigzag}, decorate] (0,0) --++ (0,0.6) node [label={[text depth=0]$\eta_1$}] {};
			\draw[white,decoration={zigzag}, decorate, line width=3pt] (2,0) --++ (0,0.6) ;
			\draw[white,decoration={zigzag}, decorate, line width=3pt] (6,0) --++ (0,0.6) ;
			\draw[black,decoration={zigzag}, decorate] (2,0) --++ (0,0.6) node [label={[text depth=0]$\eta_2$}] {};
			\draw[black,decoration={zigzag}, decorate] (6,0) --++ (0,0.6) node [label={[text depth=0]$\eta_s$}] {};
			
			\draw[black] (0,0) --++ (0,-0.6) -- (3,-1.2);
			\draw[black] (2,0) --++ (0,-0.6) -- (3,-1.2);;
			\draw[black] (6,0) --++ (0,-0.6) -- (3,-1.2);;
			
			\draw[black] (3,-1.2) --++ (0,-0.6) node [label={[shift={(0,-0.6)},text depth=0]$i$}] {};
			
			\node[draw,shape=circle,fill=black!70, inner sep=2.5pt] at (0,0){};
			\node[draw,shape=circle,fill=black!70, inner sep=2.5pt] at (2,0){};
			\node[draw,shape=circle,fill=black!70, inner sep=2.5pt] at (6,0){};
			
			\node[draw,shape=circle,fill=Green!70, inner sep=2.5pt] at (3,-1.2){};
			
		\end{tikzpicture}
	\caption*{(a)}
	\end{subfigure}
	\hfill
	\begin{subfigure}[b]{0.49\textwidth}
		\centering
		\begin{tikzpicture}
			\def\s{0.35}
			
			\draw[black] (0,0) --++ (3.5,0);
			\draw[black, dotted] (3.5,0) --++ (1,0);
			\draw[black] (4.5,0) --++ (1.5,0);
			
			\draw[black,decoration={zigzag}, decorate] (0,0) --++ (0,0.6) node [label={[text depth=0]$\eta_1$}] {};
			\draw[white,decoration={zigzag}, decorate, line width=3pt] (2,0) --++ (0,0.6) ;
			\draw[white,decoration={zigzag}, decorate, line width=3pt] (6,0) --++ (0,0.6) ;
			\draw[black,decoration={zigzag}, decorate] (2,0) --++ (0,0.6) node [label={[text depth=0]$\eta_2$}] {};
			\draw[black,decoration={zigzag}, decorate] (6,0) --++ (0,0.6) node [label={[text depth=0]$\eta_s$}] {};
			
			\draw[black] (0,0) --++ (0,-0.6) -- (3,-1.2);
			\draw[black] (2,0) --++ (0,-0.6) -- (3,-1.2);;
			\draw[black] (6,0) --++ (0,-0.6) -- (3,-1.2);;
			
			\draw[black] (3,-1.2) --++ (0,-0.6) node [label={[shift={(0,-0.6)},text depth=0]$i$}] {};
			
			\node[draw,shape=circle,fill=black!70, inner sep=2.5pt] at (0,0){};
			\node[draw,shape=circle,fill=black!70, inner sep=2.5pt] at (2,0){};
			\node[draw,shape=circle,fill=black!70, inner sep=2.5pt] at (6,0){};
			
			\node[draw,shape=circle,fill=Green!70, inner sep=2.5pt] at (3,-1.2){};
			
		\end{tikzpicture}
		\caption*{(b)}
	\end{subfigure}
	\caption{Graphical notation of the functional decomposition of $\mathbf{\Psi}$ corresponding to the kernel $\psi(x, \eta)$ and data points $(x_j)_{j=1}^M$: (a) Using the canonical format for $\hat{\Psit}$ with cores coupled by a common rank. (b) Using the FTT format with a chain-like coupling. black circles represent the cores of $\hat{\mathbf{\Psi}}$, each having one continuous mode represented by a zigzag line. Contraction with $\Delta$ (green circle) merges all free modes of $\hat{\mathbf{\Psi}}$ into one. }
	\label{fig: kernel decomposition}
\end{figure}
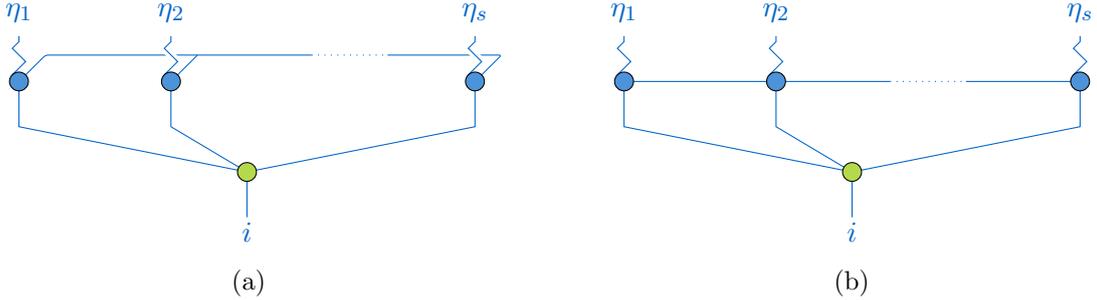


\begin{proposition} \label{prop:STIFFDEC}
Assume that the basis functions $\varphi_1, \dots, \varphi_n$ of the ansatz space $L_n$ are separable in the sense of \eqref{eq:PROD}
and that the  kernel function  $\psi$ allows for the representation~\eqref{eq:FACKER}.
Then the tensor 
\begin{equation} \label{eq:AREP}
 {\mathbf A} \in \R^{M \times n_1 \times \cdots \times n_s}, \quad  {\mathbf A}_{i,j_1,\dots, j_s} = a_{i,j(j_1,\dots,j_s)}
\end{equation}
with  enumeration $j(j_1,\dots,j_s)$ introduced in \eqref{eq:ENUM} allows for the factorization
\begin{equation} \label{eq:AFAC}
{\mathbf A}  =  \Delta \cdot \sum_{\ell = 1}^r {\mathbf A}_\ell^{(1)} \otimes \cdots \otimes {\mathbf A}_\ell^{(s)}, 
\quad {\mathbf A}_\ell^{(k)} = (\hat{\mathbf{\Psi}}^{(k)}_{\ell,:,:}\cdot {\mathbf \Phi}^{(k)}) \in \Cc^{M\times n_k}, \;\; k=1,\dots,s,
\end{equation}
with  ${\mathbf \Phi}^{(k)}$,  $\hat{\mathbf{\Psi}}^{(k)}$, and  $\Delta$ taken from   \eqref{eq:BASDEC},  \eqref{eq:HELP}, and \eqref{eq:DELTA}, respectively.
\end{proposition}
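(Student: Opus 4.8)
The plan is to reduce the claimed factorization \eqref{eq:AFAC} to a componentwise identity and to verify it directly, exactly along the lines of the proof of Proposition~\ref{prop:KERNEL}. The only analytic ingredient beyond pure bookkeeping is Fubini's theorem, which lets the $s$-dimensional integral defining an entry of $\mathbf{A}$ split into a product of univariate integrals over the $Q_k$; everything else is the interplay of the tensor product $\otimes$, the (functional) contraction $\cdot$, and the delta tensor $\Delta$.

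First I would unwind the definition of the stiffness matrix. By \eqref{eq:SM} and the definition of $\cG$, the entry with multi-index $j=j(j_1,\dots,j_s)$ as in \eqref{eq:ENUM} is
\[
a_{i,j(j_1,\dots,j_s)} = (\cG \varphi_j)(x_i) = \frac{1}{|Q|}\int_Q \psi(x_i,\eta)\,\varphi_j(\eta)\,d\eta .
\]
Next I would insert the factorization \eqref{eq:FACKER} of $\psi$ and the product structure \eqref{eq:PROD} of $\varphi_j$. For each fixed summation index $\ell$ the integrand factorizes across the coordinates $\eta_1,\dots,\eta_s$, so that over $Q=Q_1\times\cdots\times Q_s$ Fubini's theorem turns the integral into a product of one-dimensional integrals,
\[
a_{i,j(j_1,\dots,j_s)} = \frac{1}{|Q|}\sum_{\ell=1}^r \prod_{k=1}^s \int_{Q_k} \psi^{(k)}_\ell(x_{i,k},\eta_k)\,\varphi^{(k)}_{j_k}(\eta_k)\,d\eta_k .
\]
By the definitions \eqref{eq:HELP} of $\hat{\mathbf{\Psi}}^{(k)}$ and \eqref{eq:BASDEC} of $\mathbf{\Phi}^{(k)}$, each univariate integral is precisely the entry $(\mathbf{A}^{(k)}_\ell)_{i,j_k}$ of the core $\mathbf{A}^{(k)}_\ell = \hat{\mathbf{\Psi}}^{(k)}_{\ell,:,:}\cdot\mathbf{\Phi}^{(k)}$ appearing in \eqref{eq:AFAC}, so that $a_{i,j(j_1,\dots,j_s)} = \frac{1}{|Q|}\sum_{\ell}\prod_k (\mathbf{A}^{(k)}_\ell)_{i,j_k}$.

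It then remains to identify the right-hand side with the $\Delta$-contraction in \eqref{eq:AFAC}. Here I would argue exactly as in the proof of Proposition~\ref{prop:KERNEL}: evaluating the component $(\Delta\cdot\sum_\ell \mathbf{A}^{(1)}_\ell\otimes\cdots\otimes\mathbf{A}^{(s)}_\ell)_{i,j_1,\dots,j_s}$, the delta tensor \eqref{eq:DELTA} collapses the $s$ separate copies of the $M$-mode carried by $\mathbf{A}^{(1)}_\ell\otimes\cdots\otimes\mathbf{A}^{(s)}_\ell$ onto the single index $i$, leaving $\sum_\ell \prod_k (\mathbf{A}^{(k)}_\ell)_{i,j_k}$. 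Matching with the previous display then closes the proof.

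The main point to get right — and the step I would treat most carefully — is the mode bookkeeping: the product $\bigotimes_k \mathbf{A}^{(k)}_\ell$ lives in $\Cc^{M\times n_1\times\cdots\times M\times n_s}$ with $s$ copies of the $M$-mode, and one must check that $\Delta$ contracts precisely these $s$ copies while the free modes $n_1,\dots,n_s$ survive in the correct order; this is the same mixed-product and interchange-of-sum-and-integral argument used for Proposition~\ref{prop:KERNEL}, now combined with Fubini. I would also record the normalization: the cores $\mathbf{A}^{(k)}_\ell$ carry no factor $|Q_k|^{-1}$, so the constant $1/|Q|=\prod_k|Q_k|^{-1}$ originating from $\cG$ must be absorbed into one of the cores (equivalently, distributed as a factor $|Q_k|^{-1}$ over the $\mathbf{\Phi}^{(k)}$) for \eqref{eq:AFAC} to hold literally as stated.
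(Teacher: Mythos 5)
Your proof is correct and follows essentially the same route as the paper's: a componentwise verification combining the kernel factorization \eqref{eq:FACKER}, the basis factorization \eqref{eq:PROD}, Fubini's splitting of the integral over $Q = Q_1\times\cdots\times Q_s$, and the delta-tensor collapse — the paper simply runs the identical chain of equalities in the opposite direction, starting from the contracted right-hand side and ending at $a_{i,j(j_1,\dots,j_s)}$. Your closing remark on normalization is also well taken: the paper's own proof silently identifies $\int_Q\psi(x_i,\eta)\,\varphi_j(\eta)\,d\eta$ with $a_{ij}$, which is inconsistent with the definitions \eqref{eq:QOP} and \eqref{eq:SM}, so \eqref{eq:AFAC} holds literally only once the constant $1/|Q|$ is absorbed into the cores (or $\cG$ is understood without that normalization), exactly as you point out.
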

\begin{proof}
The desired identity follows from
\begin{align*}
\At_{i,j_1,\dots,j_s} =& \sum_{i_1}^M \dots \sum_{i_s}^M \Delta_{i,i_1, \dots,i_s}\sum_{\ell=1}^r \At_{\ell, i_1, j_1}^{(1)} \cdots \At_{\ell, i_s, j_s}^{(s)}\\
=& \sum_{\ell=1}^r \left( \sum_{i_1}^M \delta_{i,i_1}\At_{\ell, i_1, j_1}^{(1)} \right) \cdots \left( \sum_{i_s}^M\ \delta_{i,i_s}\At_{\ell, i_s, j_s}^{(s)} \right) 
=  \sum_{\ell =1}^r \At_{\ell, i, j_1}^{(1)} \cdots \At_{\ell, i, j_s}^{(s)}\\
=&  \sum_{\ell =1}^r \int_{Q_1} \psi_\ell(x_{i_1}, \eta_1)\varphi_{j_1}^{(1)}(\eta_1) \; d \eta_1 \dots \int_{Q_s} \psi_\ell(x_{i_s}, \eta_s)\varphi_{j_s}^{(s)}(\eta_s) \; d \eta_s\\
=& \int_Q  \left(\sum_{\ell =1}^r \psi_\ell(x_{i_1}, \eta_1)\cdots \psi_\ell(x_{i_s}, \eta_s) \right) \;\varphi_{j_1}^{(1)}(\eta_1)\cdots \varphi_{j_s}^{(s)}(\eta_s)\; d\eta\\
=& \int_Q  \psi(x_i, \eta) \varphi_{j(j_1,\dots, j_s)}(\eta) \; d\eta = a_{i, j(j_1, \dots, j_s)}.\qedhere
\end{align*}
\end{proof}
Utilizing the enumeration \eqref{eq:ENUM}, we introduce the tensor representation
\[
{\Ut} \in \R^{n_1 \times \cdots \times n_s}, \quad \Ut_{j_1, \dots, j_s}= U_{j(j_1,\dots, j_s)},
\]
of the unknown solution vector $U \in \R^n$ in \eqref{eq:ALGPROB}. Then, the regularized 
linear system \eqref{eq:ALGPROB} in Problem~\ref{prob:ALGPROB}  takes the form
\begin{equation} \label{eq:FULLPROB}
(\varepsilon {\mathbf I} + \At^\top\At) \Ut = \At^\top {\mathbf b}
\end{equation}
with  unit tensor ${\mathbf I}\in \R^{n_1 \times \cdots \times n_s}$ and ${\mathbf b}=b \in \R^M$.
Solving \eqref{eq:FULLPROB} is equivalent to minimizing the tensor analogue
\[
{\mathbf J}_{\varepsilon}(\Vt)
= \lVert \At\Vt - {\mathbf b} \rVert^2_{2,\R^M} + \varepsilon \lVert \Vt \rVert^2_{2,\R^{n_1 \times \cdots \times n_s}}, 
\quad \Vt \in  \R^{n_1 \times \cdots \times n_s},
\]
of the regularized discrete loss functional ${\mathbb J}_{\varepsilon}$ defined in \eqref{eq:REGLI}.
We now introduce the corresponding tensor-train approximation of the linear system \eqref{eq:FULLPROB}.
\begin{problem} \label{prob:TTAPROX}
Find  cores $\Ut^{(k)}\in \R^{r_{k-1}\times n_k \times r_k}$, $k=1,\dots,s$, with $r_0=r_s=1$ and prescribed ranks $r_k$, $k=1, \dots, s-1$
such that the resulting tensor train
\begin{equation} \label{eq:TTAPPROX}
\Ut_r =  \sum_{\ell_0=1}^{r_0} \dots \sum_{\ell_s=1}^{r_s}  \Ut^{(1)}_{\ell_0,:, \ell_1} \otimes \dots \otimes \Ut^{(s)}_{\ell_{s-1},:, \ell_s} 
\in \R^{n_1 \times \cdots \times n_s},
\end{equation}
is minimizing  the regularized discrete loss functional   ${\mathbf J}_{\varepsilon}$
over the submanifold of all tensor trains $\Vt$ of the form \eqref{eq:TTAPPROX}.
\end{problem}
Existence of a solution $\Ut_r$ of Problem~\ref{prob:TTAPROX} follows by compactness arguments.
Note that we have $\Ut_r = \Ut$, if the selection $r_1, \dots, r_{s-1}$ agrees with the minimal rank of  the unique solution $\Ut$ of \eqref{eq:FULLPROB},
because there exists an exact tensor train representation of $\Ut$ in this case (see \cite[Theorem 1]{holtz2012manifolds}).
As a consequence of non-uniqueness of  tensor train representations,  uniqueness of the cores  
$\Ut^{(k)} \in \R^{r_{k-1}\times n_k \times r_k}$ can not be expected.

\subsection{Algebraic solution by alternating linear systems}\label{sec: ARR}
The Tensor-Train Approximation Problem~\ref{prob:TTAPROX} allows for a variety of different methods for direct or iterative solution, such as pseudoinversion~\cite{KLUS2019, gelss2017tensor, GELSS2019, Batselier2021}
or core-wise linear updates~\cite{HOLTZ2012, KLUS2019, Dolgov2014,  Grasedyck2019}, respectively.
Here, we concentrate on iterative solution by \emph{alternating ridge regression} (ARR), cf.~\cite{KLUS2019},
because this approach significantly reduces computational cost and memory consumption in comparison with explicit computation of pseudoinverses
(see, e.g., \cite{KLUS2019} for details). 
These methods provide successive updates of the cores $\tilde{\Ut}^{(k)} \in \R^{r_{k-1} \times n_k \times r_k}$, $k=1,\dots, s$,
by solving a sequence of reduced linear problems.

More precisely, for given iterate $\tilde{\Ut}$ of the form \eqref{eq:TTAPPROX} and each fixed $k$, the cores $\tilde{\Ut}^{(1)}, \dots, \tilde{\Ut}^{(k-1)}$ are \emph{left-orthonormalized} and $\tilde{\Ut}^{(k+1)}, \dots, \tilde{\Ut}^{(s)}$ are \emph{right-orthonormalized} by successive singular value decompositions~\cite{oseledets2009breaking} such that the rows and columns, respectively, of certain reshapes of these cores form orthonormal sets.
Note that, without truncation, this procedure produces a different but equivalent representation of $\tilde{\Ut}$, say $\hat{\Ut}$.
The orthonormalized cores then define a retraction operator 
\[
\Qt_{\hat{\Ut},k}  \in \R^{(n_1\times \cdots \times n_s) \times (r_{k-1} \times n_k \times r_k)},
\]
which satisfies
\[
\Qt_{\hat{\Ut},k} \Vt^{(k)} =  \sum_{\ell_0=1}^{r_0} \dots \sum_{\ell_s=1}^{r_s} \bigotimes_{i=1}^{k-1}
\hat{ \Ut}^{(i)}_{\ell_{i-1},:, \ell_{i}} \otimes  \Vt^{(k)}_{\ell_{k-1},:,\ell_k} \otimes \bigotimes_{i=k+1}^{s}
\hat{ \Ut}^{(i)}_{\ell_{i-1},:, \ell_{i}} 
\]
for all $\Vt^{(k)}\in \R^{r_{k-1} \times n_k \times r_k}$ and is orthonormal in the sense that
\begin{equation} \label{eq:RETORTO}
	\Qt_{\hat{\Ut},k}^\top  \cdot \Qt_{\hat{\Ut}, k} = \It \in \R^{ (r_{k-1} \times n_k \times r_k) \times (r_{k-1} \times n_k \times r_k)}.
\end{equation}

Minimization with respect to the $k$-th core  then gives rise to the reduced  problem to
minimize
\[
\lVert \At \Qt_{\tilde{\Ut}, k}\Vt^{(k)} - {\mathbf b} \rVert^2_{2, \R^{M}}
+ \varepsilon \lVert \Qt_{\tilde{\Ut}, k} \Vt^{(k)} \rVert^2_{2,\R^{n_1\times \cdots \times n_s}}
\]
over all $\Vt^{(k)} \in \R^{r_{k-1} \times n_k \times r_k}$ or, equivalently,  to solve the reduced linear system
\begin{equation} \label{eq:REDSYY}
\Qt_{\tilde{\Ut}, k}^\top (\varepsilon \It + \At^\top \At) \Qt_{\tilde{\Ut}, k} \tilde{\Ut}^{(k)}_{\text{new}} 
= \Qt_{\tilde{\Ut}, k}^\top\At^\top \bt 
\end{equation}
for the new core $\tilde{\Ut}^{(k)}_{\text{new}}\in \R^{r_{k-1} \times n_k \times r_k}$
(see, e.g. \cite{HOLTZ2012, KLUS2019, Dolgov2014,  Grasedyck2019} for details).
The unique solution  $\tilde{\Ut}^{(k)}_{\text{new}}$ of \eqref{eq:REDSYY} 
can be computed, e.g., by Cholesky decomposition of the symmetric and positive definite coefficient matrix.

The corresponding update of cores is typically applied in alternating order, i.e., from $k=1$ to $k=s$ and then back to $k=1$.
As the same regularization parameter $\varepsilon>0$ is used for each core $k$,
the alternating linear systems approach~\cite{HOLTZ2012}  applied to \eqref{eq:FULLPROB} and alternating ridge regression~\cite{KLUS2019} are equivalent in this case.
As a consequence of the intrinsic nonlinearity of the overall iterative approach, the convergence analysis is complicated.
We refer to \cite{HOLTZ2012, Rohwedder2013,  Espig2015} for further information.

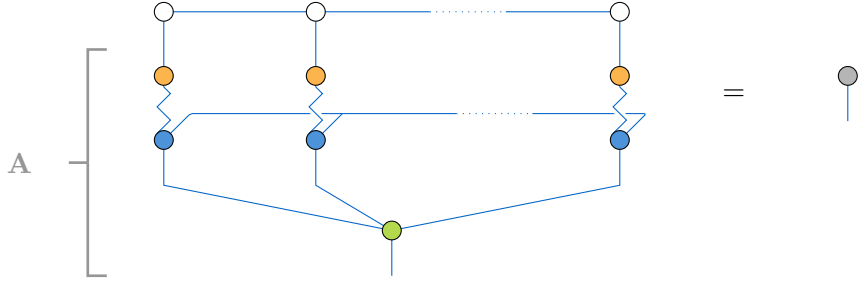
\begin{figure}[htb]
	\centering
	\begin{tikzpicture}
		\def\s{0.35}
		\draw[black, rounded corners = 1pt] (0,0) --++ (\s,\s) --++ (3.5,0);
		\draw[black] (2,0) --++ (\s,\s);
		\draw[black, dotted] (3.5+\s,\s) --++ (1,0);
		\draw[black, rounded corners = 1pt] (4.5+\s,\s) --++ (1.5,0) --++ (-\s,-\s);
		
		\draw[black] (0,1.7) --++ (3.5,0);
		\draw[black, dotted] (3.5,1.7) --++ (1,0);
		\draw[black] (4.5,1.7) --++ (1.5,0);
		
		\draw[black] (0,0.85) --++ (0,0.85);
		\draw[black] (2,0.85) --++ (0,0.85);
		\draw[black] (6,0.85) --++ (0,0.85);
		
		\draw[black,decoration={zigzag}, decorate] (0,0) --++ (0,0.85) node [label={[text depth=0]}] {};
		\draw[white,decoration={zigzag}, decorate, line width=3pt] (2,0) --++ (0,0.85);
		\draw[white,decoration={zigzag}, decorate, line width=3pt] (6,0) --++ (0,0.85);
		\draw[black,decoration={zigzag}, decorate] (2,0) --++ (0,0.85);
		\draw[black,decoration={zigzag}, decorate] (6,0) --++ (0,0.85);
		
		\draw[black] (0,0) --++ (0,-0.6) -- (3,-1.2);
		\draw[black] (2,0) --++ (0,-0.6) -- (3,-1.2);;
		\draw[black] (6,0) --++ (0,-0.6) -- (3,-1.2);;
		
		\draw[black] (3,-1.2) --++ (0,-0.6);
		
		\node[draw,shape=circle,fill=white, inner sep=2.5pt] at (0,1.7){};
		\node[draw,shape=circle,fill=white, inner sep=2.5pt] at (2,1.7){};
		\node[draw,shape=circle,fill=white, inner sep=2.5pt] at (6,1.7){};
		
		\node[draw,shape=circle,fill=Orange!70, inner sep=2.5pt] at (0,0.85){};
		\node[draw,shape=circle,fill=Orange!70, inner sep=2.5pt] at (2,0.85){};
		\node[draw,shape=circle,fill=Orange!70, inner sep=2.5pt] at (6,0.85){};
		
		\node[draw,shape=circle,fill=black!70, inner sep=2.5pt] at (0,0){};
		\node[draw,shape=circle,fill=black!70, inner sep=2.5pt] at (2,0){};
		\node[draw,shape=circle,fill=black!70, inner sep=2.5pt] at (6,0){};
		
		\node[draw,shape=circle,fill=Green!70, inner sep=2.5pt] at (3,-1.2){};
		
		\node[] at (7.5,0.6){$=$};
		
		\draw[black] (9,0.85) --++ (0,-0.6) ;
		\node[draw,shape=circle,fill=Gray!70, inner sep=2.5pt] at (9,0.85){};
		\def\y{-0.3}
		\node[] at (-1.9,\y){\textcolor{Gray}{$\mathbf{A}$}};
		\draw[Gray, line width=1pt] (-1.25,\y) --++ (0.25,0);
		\draw[Gray, line width=1pt] (-1,\y) --++ (0,1.5) --++ (0.25,0);
		\draw[Gray, line width=1pt] (-1,\y) --++ (0,-1.5) --++ (0.25,0);
		
	\end{tikzpicture}
	\caption{Graphical notation of tensor-based counterpart of the (underdetermined) system $A U = b$: The cores of $\mathbf{\Psi}$ (black circles) are contracted with the cores of $\mathbf{\Phi}$ (orange circles) by integrating over common modes. Together with the delta tensor (green circle), this system builds the tensor operator $\mathbf{A}$. The coefficient tensor $\mathbf{U}$ (white circles) is approximated in the TT format. Here, $\hat{\Psit}$ is given in canonical format but could also be defined as functional tensor train, see Section~\ref{sec: tensor factorization}.}
	\label{fig: system}
\end{figure}


\section{Sampling of discrete neural network parameters} \label{sec:SAMPLE}
By construction, an approximate solution $\tilde{\Ut}\in \R^{n_1\times \cdots \times n_s}$ of Problem~\ref{prob:TTAPROX} 
	provides an approximate solution 
	\[
	\tilde{U} = (\tilde{U}_j)\in \R^n, \qquad \tilde{U}_{j(j_1, \dots, j_s)} = \tilde{\Ut}_{j_1,\dots,j_s},
	\]
	of Problem~\ref{prob:ALGPROB}  by utilizing the enumeration \eqref{eq:ENUM}. In turn, $\tilde{U}$ gives rise to the approximate solution
	\[
	\tilde{u} = \sum_{j=1}^n \tilde{U}_j \varphi_j \in L_n \subset L^2(Q)
	\]
	of Problem~\ref{prob:DISCOPT} (function data) or Problem~\ref{prob:PPOINT} (point data) and the resulting approximation
	\begin{equation} \label{eq:FNT}
		\cG \tilde{u} = \sum_{j=1}^n  \tilde{U}_j \int_Q \psi(\cdot, \eta) \varphi_j(\eta) \; d \eta
	\end{equation}
	of the trained Fredholm network $\cG u$ introduced in Section~\ref{subsec:FN}. 
	Note that $\cG \tilde{u}(x)$ can  be  evaluated in given  test or training points by 
	contracting  $\tilde{\Ut}$ with the corresponding stiffness tensor $\mathbf{A}$.

Further approximation of $\cG \tilde{u}$ by classical Monte Carlo sampling of  the integral 
\begin{equation} \label{eq:DISCOPT}
	\cG \tilde{u} = \int_{Q}\psi(\cdot,\eta) \tilde{u}(\eta)\; d\eta
\end{equation}
provides  parameters $\tilde{\zeta}_i = (\tilde{\eta}_i, \tilde{u}_i)$ with  i.i.d.~$\tilde{\eta}_i$ and
$\tilde{u}_i  = \tilde{u}(\tilde{\eta}_i)$, $i=1,\dots,N$,  and thus the trained neural network $F_N(\cdot, \tilde{\zeta})$.

 {
\begin{remark} \label{rem:TRAINED}
Let us consider the situation addressed in Subsection~\ref{subsec:TIKHONOV}.
Under the (unrealistic) assumption that the linear Problem~\ref{prob:ALGPROB}  is solved exactly, 
classical Monte Carlo sampling of $\cG u_{\varepsilon, h}$ provides the  trained neural network $F_N(\cdot, \zeta_{\varepsilon, h})$.
Then, in the light of  Proposition~\ref{prop:RESERR},  the  residual error estimate
\[
\cJ_N(\zeta_{\varepsilon, h}) =\cO(\varepsilon^2 + \varepsilon^{-2}h^2 + N^{-1/2})
\]
for the discrete loss functional $\cJ_N$ introduced in \eqref{pval} holds by the central limit theorem under suitable regularity assumptions.
\end{remark}
}
Quasi-Monte Carlo sampling is based on quasi-random sequences in $\R^s$, such as, e.g., Sobol sequences,
that fill the space more uniformly than pseudo-random sequences as used for classical Monte Carlo integration.
In order to further enhance sampling quality, we now introduce  {\em inductive importance sampling}
that is directly targeting the underlying loss functional $\cJ_N$ defined in~\eqref{pval}.
Given a set of quasi-Monte Carlo sample points $(\tilde{\eta}_i)_{i=1}^{N^\prime}$,
corresponding parameter function evaluations $( \tilde{u}(\tilde{\eta}_i))_{i=1}^{N^\prime}$,  and some $N \leq  N^\prime$,
we want to inductively  find parameters $(\tilde{\eta}_{i_j})_{j=1}^N$ such that $F_N ( \cdot, \zeta)$ 
with $\zeta = (\tilde{\eta}_{i_j}, \tilde{u}(\tilde{\eta}_{i_j}))_{j=1}^N$ is reducing the loss functional $\cJ_N$ sufficiently well. 
For this purpose,  the first sample $\eta_{i_1}$ is chosen according to
\[
		\eta_{i_1}= \argmin_{\eta \in \{\eta_1, \dots, \eta_{N^\prime} \}}  \cJ_1((\eta, \tilde{u}(\eta)))
\]
and for given samples $\eta_{i_1}, \dots, \eta_{i_K}$, $K<N$, we select
\[
		\eta_{i_{K+1}} =  \argmin_{\eta \in \{\eta_1, \dots, \eta_{N^\prime}\}\setminus \{\eta_{i_1}, \dots, \eta_{i_K} \}} 
		\cJ_{K+1} \left( (\eta_{i_1}, \tilde{u}(\eta_{i_1})), \dots,  (\eta_{i_K}, \tilde{u}(\eta_{i_K})), (\eta, \tilde{u}(\eta) ) \right) .
\]
Note that we  use exact kernel and activation functions here and not their approximate factorizations which the Fredholm network is trained on.
While quasi-Monte Carlo integration performed reasonably well in simple cases, 
we found a considerable improvement of efficiency by inductive importance sampling in our numerical experiments to be reported below.

\section{Numerical experiments} \label{sec:NUMEX}

%
%
%
%

In order to demonstrate the practical relevance of our approach, we now consider three well-established test cases of regression and classification type.
In particular, we will numerically investigate different aspects of the tensor-based training of neural networks, such  as adaptability, accuracy, and robustness.
We emphasize that kernel, activation and even basis functions  for discretization have been chosen ad hoc, and no profound parameter optimization is performed. 
Our implementation of ARR and other relevant tensor algorithms are available in Scikit-TT\footnote{\url{https://github.com/PGelss/scikit_tt}}.

\subsection{Bank note authentication}
As a first example, we consider a classification problem to distinguish  forged from genuine bank notes, 
utilizing the UCI banknote authentication data set\footnote{\url{https://archive.ics.uci.edu/ml/datasets/banknote+authentication}} from~\cite{Lohweg2013}. 
It contains $1372$ samples of different features that were extracted from images of genuine and forged banknote-like specimens by applying wavelet transforms.
Each sample has $d=4$ different attributes: variance, skewness, curtosis, and entropy of the wavelet transformed images.
The data set is randomly divided into $M=1098$ ($\approx 80\%$) training and $m=274$ test samples.
We first apply min-max normalization to the given training data points $\hat{x}_i =(x_{i,k})_{k=1}^d$, $i=1, \dots, M$, to obtain
\begin{equation} \label{eq:MINMAXN}
	x_{i,k} = \frac{\hat{x}_{i,k} - \min(k)}{\max(k) - \min(k)} \in [0,1], \qquad  i=1, \dots, M, \quad k = 1,\dots, d,
\end{equation}
denoting  $\min(k) = \min_{i \in \{1, \dots, M\}} \hat{x}_{i,k}$  and $\max(k) = \max_{i \in \{1, \dots, M\}} \hat{x}_{i,k}$.
Hence, $X = [0,1]^d$. The test data points are normalized in the same way using the same constants.

For the neural network $F_N$ of the form \eqref{eq:PARREP},
we then choose a Gaussian activation function~\eqref{eq:GAUSSAC} with $\kappa=1$ and the same parameter set for each dimension, i.e., $Q = [-1,1]^s$ with $s=d=4$.

Discretization of the continuous training Problem~\ref{prob:FFUNC} is performed by piecewise constant finite elements 
with respect to an equidistant grid with mesh size $1/4$, see Example~\ref{ex:1}.
The resulting ansatz space has the dimension $n = 8^d = 4096$.
We select the  Tikhonov regularization parameter $\varepsilon = 10^{-3}$.
For the iterative solution of the resulting Problem~\ref{prob:TTAPROX} with  ranks $r_1= \cdots = r_{s-1}=10$ 
we apply $20$ sweeps of ARR (cf.~Section~\ref{sec: ARR})  with  all tensor core entries  equal to one as initial guess.  
From the resulting approximation $\tilde{\Ut}$ of the exact solution $\Ut_r$,
we then obtain the approximation $\tilde{u}\in L _n \subset L^2(Q)$ of the solution $u\in L^2(Q)$ of Problem~\ref{prob:FFUNC},
and the resulting approximately trained Fredholm network $\cG \tilde{u}$ as explained in Section~\ref{sec:SAMPLE}.
 From $\cG \tilde{u}$ we  extract  discrete parameters $\tilde{\zeta} = (\tilde{\eta}_i, \tilde{u}(\tilde{\eta}_i) )_{i=1}^N$
by quasi-Monte Carlo sampling for various numbers $N$ of neurons to obtain the trained neural network $F_N(x, \tilde{\zeta})$, cf.~Section~\ref{sec:SAMPLE}.
A  sample $x\in X$ is then classified as $0$ (genuine), if $F_N(x, \tilde{\zeta}) < 0.5$ and $1$ (forged) otherwise. 

In order to test the predictability of our trained neural networks for a fixed number $N$ of neurons,
we randomly select training and test sets of as described above, use quasi-Monte Carlo sampling to extract discrete parameters and evaluate the corresponding realization of the classification rate, i.e., the ratio of the number of correct predictions and the size of the selected test set.
This procedure is repeated 100 times, in order to obtain expectation and standard deviation of the classification rates
together with the number of perfect classifications, i.e., the number of test sets that are correctly classified without an exception.
The results are shown in Table~\ref{tab: banknote} for various $N$.
As expected, the predictability is increasing with increasing $N$.
While no test sets are perfectly classified for $N \leq 256$, we even achieve  full optimality for $N=1024$.
Note that predictability deteriorates for even larger $N$, which might be due to numerical noise caused by round-off errors.
\begin{table}[]
	\caption{Results for the bank note authentication data set: Predictability of trained neural network $F_N(\cdot, \tilde{\zeta})$ 
	for increasing~$N$ in terms of expectation and standard deviation of the classification rates.}
	\begin{tabular}{ccc}
		$N$ & classification rate & perfect classification     \\\hline\\[-0.3cm]
		$2^6 = 64$      & $0.9353 \pm 0.0147$ & 0\\
		$2^7 = 128$     & $0.8653 \pm 0.0279$ & 0\\
		$2^8 = 256$     & $0.7827 \pm 0.0136$ & 0 \\
		$2^9 = 512$     & $0.9991 \pm 0.0022$ & 81 \\
		$2^{10} = 1024$ & $1.0000 \pm 0.0000$ & 100 \\
		$2^{11} = 2048$ & $0.9999 \pm 0.0007$ & 96
	\end{tabular}
	\label{tab: banknote}
\end{table}

\subsection{Concrete compressive strength prediction}
We aim at the description of concrete compressive strength  in terms of tolerated mechanical stress
as a function $F$ of the values $x=(x_k)_{k=1}^d$ of  $d=8$ parameters, i.e.,  the percentage of 
cement, fly ash, blast furnace slag, superplasticizer, fine aggregates, coarse aggregates, water, and age.
To this end, we make use of a data set with 1030 samples  that was collected over several years 
and has repeatedly been deployed for corresponding training of neural networks, see, e.g.,~\cite{YEH1998, Yeh2003, Yeh2006, Shah2022}.
We randomly split the data set into $M = 824$ ($80 \%$) training and $m = 206$ test samples.
As the output of each sample is distributed between $2.33\,\mathrm{MPa}$ and $82.60\,\mathrm{MPa}$, different  input variables might be of different orders of magnitude.
In order to  avoid large values dominating the others,  
min-max normalization is applied in analogy to \eqref{eq:MINMAXN}, and we obtain $X=[0,1]^d$.
 
We consider a neural network of the form \eqref{eq:PARREP} with ridge kernel and Heaviside activation function~\eqref{eq:HEAVI}
together with the parameter set $Q = [-1,1]^s \subset \R^s$, $s=d+1=9$.

For  discretization of the continuous training Problem~\ref{prob:FFUNC}, we choose the ansatz space $L_n$ spanned by  $n = 3^d = 6561$  
tensor products of Chebyshev polynomials of the first kind up to order $2$. 
The regularization parameter is set  to $\varepsilon=10^{-8}$.
Approximate factorization of the Heaviside activation function is performed as
presented in the Appendix~\ref{A:HEAVI}, and we set $\ell_0 = 100$. 
Hence, the decomposition \eqref{eq:AFAC} of the corresponding approximate coefficient tensor 
$\At \in  \R^{M \times n}= \R^{824 \times 6561}$  has the canonical rank $r=201$. 
We normalize the tensor $\At$ such that $\lVert \At \rVert_{2,\R^{M \times n}} = 1$. 
The coefficient tensor corresponding to the test set is multiplied by the same normalization constant.

For the tensor-train approximation \eqref{eq:TTAPPROX} in Problem~\ref{prob:TTAPROX} we choose the ranks $r_1= \cdots = r_{s-1}=10$
and  apply 5 sweeps of ARR (cf.~Section~\ref{sec: ARR}) with  all tensor core entries  equal to one as an initial guess.
From  the resulting  approximate solution  $\tilde{\Ut}$ we derive the approximate solution $\tilde{u}$ 
of Problem~\ref{prob:FFUNC} which in turn provides the approximately trained Fredholm network $\cG \tilde{u}$ as explained in Section~\ref{sec:SAMPLE}. 
Discrete parameters $\tilde{\zeta} = (\tilde{\eta}_i, \tilde{u}(\tilde{\eta}_i) )_{i=1}^N$ 
providing  the corresponding neural network $F_N(\cdot, \tilde{\zeta})$ can finally be obtained by a suitable sampling strategy.
In this experiment, we use  both standard quasi-Monte Carlo sampling and inductive importance sampling  as introduced in Section~\ref{sec:SAMPLE}.

The prediction accuracy of the Fredholm network $\cG \tilde{u}$ and of neural networks $F_N(\cdot, \tilde{\zeta})$
on given test data $(y_i, F(y_i))$, $i=1,\dots,m$, are  measured by the usual $R^2$ scores
\[
	 R^2_{\cG} = 1 - \frac{\sum_{i=1}^{m} (F(y_i) - (\cG \tilde{u})(y_i))^2}{\sum_{i=1}^{m} (F(y_i)  - \overline{F}(y))^2} \quad \text{and} \quad 	
	 R^2_N = 1 - \frac{\sum_{i=1}^{m} (F(y_i) - F_N(y_i,\tilde{\zeta}))^2}{\sum_{i=1}^{m} (F(y_i)  - \overline{F}(y))^2},
\]
respectively, where $\overline{F}(y) = \frac{1}{m}\sum_{i=1}^m F(y_i)$ stands for the algebraic mean of the test values.
Note that for  evaluation of $F_N(y_i,\tilde{\zeta})$ the exact Heaviside activation function $\sigma_{\infty}$ is used  and not its approximate factorization.
Corresponding $R^2_{\cG}$ and $R^2_N$ scores on training data are computed in the same way using the training data $(x_i, F(x_i))$, $i = 1,\dots, M$, instead.
Similar to the previous example, $R^2$ scores are considered as random variables, and we apply standard Monte Carlo with 
100 samples each of random training and test  data sets to approximately compute its expectation and standard deviation.

\begin{table}[]
	\caption{Results for the Concrete compressive strength prediction: Predictability of trained neural network $F_N(\cdot, \tilde{\zeta})$ for increasing~$N$ 
	in terms of expectation and standard deviation of  $R^2$ scores.} 
	\begin{tabular}{ccc}
		$N$ & $R^2_N$ score (training data) & $R^2_N$ score (test data)\\\hline\\[-0.3cm]
		$200$     & $0.9023 \pm 0.0051$ & $ 0.8040 \pm 0.0244$\\
		$400$     & $0.9406 \pm 0.0027$ & $ 0.8330 \pm 0.0219$\\
		$600$     & $0.9563 \pm 0.0023$ & $ 0.8473 \pm 0.0209$\\
		$800$     & $0.9648 \pm 0.0019$ & $ 0.8532 \pm 0.0204$\\
		$1000$    & $0.9702 \pm 0.0019$ & $ 0.8586 \pm 0.0198$ 
	\end{tabular}
	\label{tab: ccs}
\end{table}
The resulting scores of the Fredholm network $\cG\tilde{u}$ are
\[
 \text{training sets:}\;\; R^2_\cG = 0.9154 \pm 0.0029 \qquad  \qquad  \text{ test sets:}\;\; R^2_\cG =  0.8701 \pm 0.0180.
\]
with  samples of the $R^2_\cG$ scores on training sets ranging between $0.9093$ and $0.9244$ and on test sets  between $0.8202$ and $0.9058$.
These results indeed match the scores obtained by state-of-the-art ML approaches, cf.~\cite{Shah2022, Young2019}.


However, in order to achieve similar $R^2_N$ scores for  discrete neural networks $F_N(\cdot, \tilde{\zeta})$ 
as obtained  
by  quasi-Monte Carlo sampling,
it turned out  in our numerical experiments that about $N=2^{20} \approx 10^6$  sampling points were required.
This motivates application of more advanced  sampling strategies such as  inductive importance sampling  introduced in Section~\ref{sec:SAMPLE}.
For this  strategy, we obtained a considerable reduction of  the number $N$ of required samples,  as illustrated in Table~\ref{tab: ccs}.
As in the previous example,  predictability increases with growing $N$,  both on training and test data. 
Observe that the  $R^2_N$ scores on training data are even better than for the Fredholm network $\cG\tilde{u}$
which might be an outcome of the particular importance of training data for the applied sampling strategy.
Even though the  $R^2_N$ scores on  test sets are slightly worse,
they are still comparable with  state-of-the-art ML approaches, cf.~\cite{Shah2022, Young2019}.

\subsection{MNIST data set}
In our third experiment, we classify hand-written digits from the MNIST data set~\cite{Lecun1998} 
that consists of representations of the ten digits  $0, \dots, 9$ as  grayscale images of $28 \times 28$ pixels.
We slightly modified the MNIST dataset by reducing the size of these images to $d=14 \times 14=196$ pixels 
in order to lower the computational effort for classification.
The data set is divided into  $M=60,000$ training images and $m=10,000$ test images with associated labels,  and we have $X=[0,1]^d$.

For the neural network, we choose the ridge kernel $\psi$ from \eqref{eq:RIGDE} 
with logistic activation function $\sigma_\kappa$ from \eqref{eq:LOGISTIC} and $\kappa=1$.
More precisely, we use its FTT approximation based on the identification of the sample space $X$ with its embedding $\{(x,1)^\top\;|\; x \in X\}\subset \R^s$, $s=d+1=197$ and Taylor approximation~\eqref{eq:LOGTAYLOR} at $z=0$. See the Appendix~\ref{A:LOGISTIC} for details.
In order to provide sufficient accuracy of this approximation, we require $|z|=| x_i \cdot \eta | \leq 2$ for all data points $x_i \in X$.
This is guaranteed by the choice  $\eta \in Q = [-2/\rho_X, 2/\rho_X]^s$, denoting $ \rho_X = \max_{i \in \{1, \dots, M\}} \sum_{j=1}^{d+1} x_{i,j}$.

As each image $x\in X$ has to be classified as one of the ten digits, the target function $F$  is now vector-valued.
Utilizing one-hot encoding, we get  $F(x) = (F_i(x))_{i=1}^{10}\in \R^{10}$, $x \in X$, with
\begin{equation} \label{eq:LABELLING}
F_l(x) = \begin{cases}
		1 & \text{if } x \text{ represents the digit } l \\
		0 & \text{otherwise}
	\end{cases} , \quad l =0, \dots,9.
\end{equation}
Corresponding vector-valued versions of the neural network \eqref{eq:PARREP} and of its continuous counterpart \eqref{prob:PPOINT} 
are obtained by taking coefficients $u_i \in \R^{10}$ and parameter functions $u \in \left(L^2(Q)\right)^{10}$, respectively.
Observe that each component $u_l$ of $u$ can be (approximately) computed separately 
from the scalar Problem~\ref{prob:PPOINT} with $F=F_l$ and $F_l$ defined in \eqref{eq:LABELLING}.

For the Ritz-Galerkin discretization of each of these scalar problems, we choose basis elements $\varphi^{(k)}_j$ of the form \eqref{eq:PROD} 
with constant  and linear functions $\varphi^{(k)}_{j_1}$ and  $\varphi^{(k)}_{j_2}$, respectively.
This leads to a subspace $L_n$ of $L^2(Q)$ with dimension $n = 2^s = 2^{197} \approx 2 \cdot 10^{59}$.
We choose the Tikhonov regularization parameter $\varepsilon = 10^{-15}$
and fix $r_1 = \dots = r_d = 20$ in the tensor train approximation Problem~\ref{prob:TTAPROX}.
To the initial iterate of all core entries set to one (before orthonormalization) we apply $5$ ARR sweeps 
to obtain the approximate solution $\tilde{\Ut}_l$, which  provides  the corresponding component  $\cG \tilde{u}_l$ of the (approximately)
 trained Fredholm network, cf.~Section~\ref{sec:SAMPLE}.
We finally use the softmax function for classification, i.e. for each entry $x$ the index $l_{\max}$ 
of the largest of the components  $(\cG \tilde{u}_l) (x)$, $l=0,\dots,9$,  determines the detected label.

For comparison with this trained Fredholm network,
we  consider single layer neural networks built from the well-established Keras library\footnote{\url{https://keras.io}}.
The input layer of the network comprises $196$ nodes (one for each pixel), followed by a hidden layer 
with  a varying number of $N_K$ nodes and the logistic activation function. 
The output layer also uses one-hot encoding and the softmax function.
For optimizing of the neural network parameters, $10\%$ of the given training set is used as a validation set.
The parameters are then trained for a sufficiently large number of iterations in order to ensure convergence 
of the validation loss which indicates how well the model fits to unseen data.

\begin{figure}[htb]%
	\centering%
	\includegraphics[width=8cm]{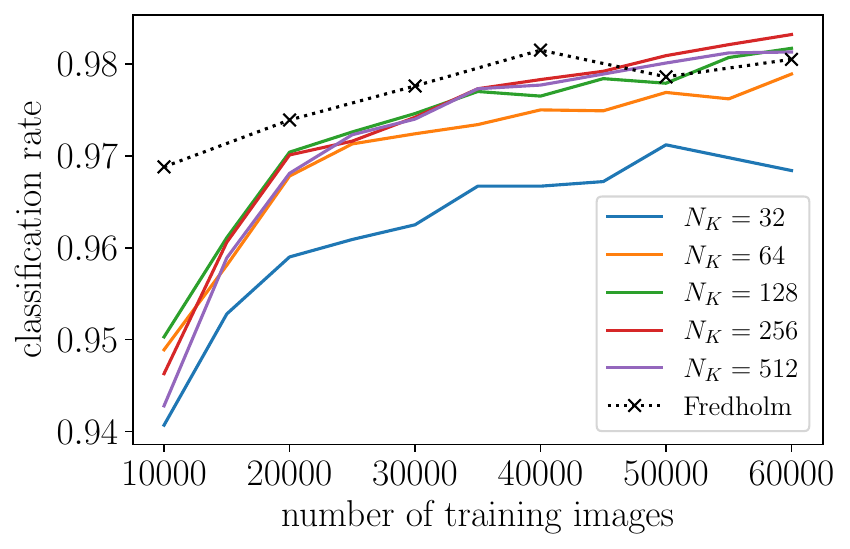}
	\caption{Results for the MNIST data set:
	Classification rates obtained from the  Fredholm network over the number $M$ of  training images 
	 in comparison with  Keras networks with increasing number $N_K$ of nodes in the hidden layer.}
	\label{fig: MNIST}
\end{figure}

Figure~\ref{fig: MNIST} shows the classification rates as obtained from our Fredholm network $\cG \tilde{u}$ and of  Keras networks 
of size $N_K= 34, 64, 128, 256, 512$ on the fixed MNIST test data set of $m=10,000$ test images over the number $M$ 
of  utilized training images.  
For the largest number of $M=60,000$ training data,
the classification rates of the Keras networks are ranging from  $0.9789$ to $0.9832$ for sufficiently large $N_K \geq 64$
and are thus comparable with the classification rate $0.9805$ of the Fredholm network.
However,  Keras networks are clearly outperformed for smaller sets of training data.

\section{Conclusion and outlook} \label{sec:CONCOUT}
In this work, we have proposed a novel approach to function approximation and the training of large and high-dimensional shallow neural networks,
based on their continuous asymptotic limit.
Utilizing  resulting Fredholm networks  for function approximation and also for training in the original discrete case,
we have thus traded highly nonlinear discrete training problems with finitely many unknown parameters 
for linear continuous training problems with infinitely many unknown values of a parameter function. 

Fredholm training problems can be regarded as least-squares formulations of Fredholm integral equations of the first kind.
 {Tikhonov regularization, approximation by Ritz-Galerkin methods and exact solution of the resulting linear system
provide residual error estimates for the underlying loss functional.
However, exact linear solution is unrealistic for most  practical problems.
Therefore,} we particularly aimed at a significant reduction of  memory consumption as well as  computational costs 
to mitigate the curse of dimensionality as occurring for high-dimensional data. 
To this end, we described different tensor formats, suitable factorizations of kernel and basis functions
as well as an alternating scheme for solving the tensorized linear training problems.
Finally, we considered quasi Monte-Carlo sampling of discrete neural network parameters,
and  introduced inductive importance sampling  which is directly targeting the loss functional $\cJ_N$.
 {Note that residual error estimates extend to the trained neural network in the (unrealistic) case of exact linear solution.}

The predictive quality of the resulting approximatively trained Fredholm and neural networks 
are illustrated by numerical experiments with three well-established test problems of regression and classification type.
The results clearly confirm that,  {in spite of the apparently strong constraints \eqref{eq:ADHOC},  the presented Fredholm} approach is highly competitive with state-of-the-art neural network-based methods concerning efficiency and reliability.

 {
These promising results suggest future investigation of Fredholm-type counterparts of  deep neural networks.
For example, let $\sigma$ be some continuous activation function and $\zeta^{(1)} = \left( U^{(1)}, \;W^{(1)},\;  b^{(1)} \right)$ with
\[
W^{(1)} = \left(w_i^{(1)}\right)_{i=1}^{N^{(1)}} \in \R^{N^{(1)} \times d}, \;\;\; b^{(1)} =  \left(b_i^{(1)}\right)_{i=1}^{N^{(1)}} \in \R^{N^{(1)}},  \;\;\;
U^{(1)} =  \frac{1}{N^{(1)}}\left(u_1^{(1)},\dots, u_{N^{(1)}}^{(1)}\right)   
\]
and $b^{(2)} \in \R$ be the parameters of the deep neural network 
\[
F_{N^{(1)}}\left(x, \zeta^{(1)}\right) = \sigma\left( U^{(1)} \sigma \left(W^{(1)}x + b^{(1)} \right) + b^{(2)}\right) = 
\sigma\left( \frac{1}{N^{(1)}} \sum_{i=1}^{N^{(1)}} u_i^{(1)} \sigma\left(  w_i^{(1)}\cdot x + b^{(1)}_i\right) + b^{(2)} \right)
\]
with input $x \in \R^d$, output $F_{N^{(1)}}\left(x, \zeta^{(1)}\right)  \in \R$, one hidden layer with $N^{(1)}$ neurons,
and another hidden layer with only one neuron for simplicity. Then we obtain
\[
F_{N^{(1)}}\left(x, \zeta^{(1)}\right) 
\quad \to \quad \sigma \left(  \cG^{(1)} u^{(1)}(x) + b^{(2)}\right) \quad \text{for} \quad N^{(1)} \to \infty
\]
with 
\[
\cG^{(1)}v (x)= \frac{1}{|Q^{(1)}|}\int_{Q^{(1)}}  v(\eta) \sigma(w\cdot x + b)\; d \eta \in \R, \qquad \eta =(w,b) \in Q^{(1)}\subset \R^{d+1},
\]
by the reasoning presented in Section~\ref{subsec:ASIOP}. 
}
 {
A  counterpart of \eqref{eq:PARREP} with two hidden layers is given by
\[
F_{N^{(1)},N^{(2)}}\left(x,\zeta^{(1)}, \zeta^{(2)}\right) = \frac{1}{N^{(2)}} \sum_{i=1}^{N^{(2)}} u_i^{(2)} \sigma \left( w_i^{(2)} U^{(1)} \sigma \left(W^{(1)}x + b^{(1)}\right) + b_i^{(2)} \right)
\]
with additional parameters $\zeta^{(2)} = (\zeta^{(2)}_i )_{i=1}^{N^{(2)}}$ and $\zeta_i^{(2)} = (u_i^{(2)}, w^{(2)}_i , b^{(2)}_i ) \in \R^{3}$. 
Note that $W^{(2)}= (w_i^{(2)}U^{(1)})_{i=1}^{N^{(2)}} \in \R^{N^{(2)} \times N^{(1)}}$ is a rank-one matrix.
The reasoning presented in Section~\ref{subsec:ASIOP} then yields
\[
F_{N^{(1)},N^{(2)}}\left(x,\zeta^{(1)}, \zeta^{(2)}\right) \quad \to \quad \cG^{(2)}u^{(2)}\left( \cG^{(1)} u^{(1)}(x) \right) \quad \text{for} \quad N^{(1)},\; N^{(2)}\to \infty
\]
with the additional Fredholm operator 
\[
\cG^{(2)}v(y) = \frac{1}{|Q^{(2)}|}\int_{Q^{(2)}}  v(\eta) \sigma(w y + b)\; d \eta \in \R, \qquad \eta =(w,b) \in Q^{(2)}\subset \R^{2}.
\]
Deep Fredholm networks with more than two hidden layers are obtained in a similar way.
While linearity of the minimization of  corresponding loss functionals is clearly lost for deep Fredholm networks, 
the construction of appropriate Tensor-Gau\ss -Newton methods
might still benefit from structural properties such as the monotonicity of activation functions or the block structure of iterated Fredholm operators.
}

 {
Both shallow and deep Fredholm networks are open to promising investigations concerning 
analysis, numerical analysis, and algorithmic improvements, which might also shed some light 
on structural properties of the approximating neural networks.
}


\appendix
\section{Factorization of ridge kernel functions}\label{app}
%
%
According to \eqref{eq:RIGDE}, ridge kernel functions $\psi \colon X \times Q \to \R$, are given by
\[
 \psi(x, \eta ) = \sigma(w \cdot   x + b), \quad \eta = (w,b) \in Q \subset \R^s, \; s=d+1.
 \]
with hypercubes $X \in \R^d $, $Q= Q_1 \times \cdots \times Q_s\subset \R^s$, and ridge activation function $\sigma: \R \mapsto \R$.
Identifying $X$ with its  embedding $\{(x,1)^\top\;|\; x \in X\}\subset \R^s$, ridge kernel functions take the form 
\[
 \psi(x, \eta ) = \sigma(x \cdot \eta).
 \]
In what follows, explicit tensor decompositons of different ridge activation functions are derived, either using the canonical format or TT format.
In case of the latter, we use the core notation of the TT format~\cite{gelss2017nearest} to represent FTT cores as two-dimensional arrays containing vector-valued functions as elements.
More precisely, for a given functional tensor train $\hat{\Psit}$ with cores $\hat{\Psit}^{(k)} \in \Cc^{r_{k-1} \times M \times Q_k  \times r_k}$ defined by 
\[
	\hat{\Psit}^{(k)}_{\ell_{k-1}, i, \eta_k, \ell_k} = \psi^{(k)}_{\ell_{k-1}, \ell_k} (x_{i,k}, \eta_k) \colon \R \times Q_k \to \Cc,
\]
a single core is written as
\begin{equation} \label{eq:CORE}
	\core{\hat{\Psit}^{(k)}} = \core{ & &  \\[-0.3cm] \hat{\Psit}^{(k)}_{1, :, \eta_k, 1}  & \cdots & \hat{\Psit}^{(k)}_{1, :, \eta_k, r_k} \\ \vdots & \ddots &\vdots \\ \hat{\Psit}^{(k)}_{r_{k-1}, :, \eta_k, 1} & \cdots & \hat{\Psit}^{(k)}_{r_{k-1}, :, \eta_k, r_k} \\[-0.35cm] & &} ,
\end{equation}
where $\hat{\Psit}^{(k)}_{\ell_{k-1}, :, \eta_k, \ell_k} = ( \psi^{(k)}_{\ell_{k-1}, \ell_k}(x_{i,k}, \eta_k) )_{i=1}^M$.
Thus, the FTT decomposition of the auxiliary tensor $\hat{\Psit}$, cf.~\eqref{eq:HELP}, takes the form  $\hat{\Psit} = \core{\hat{\Psit}^{(1)}} \otimes \dots \otimes \core{\hat{\Psit}^{(s)}}$, denoting
\[
\left(\core{\hat{\Psit}^{(k)}}\otimes \core{\hat{\Psit}^{(k+1)}}\right)_{\ell_{k-1},i_k,\eta_k, i_{k+1}, \eta_{k+1} ,\ell_{k+1}} = \sum_{\ell_k = 1}^{r_k} \hat{\Psit}^{(k)}_{\ell_{k-1},i_k,\eta_k, \ell_k} \cdot \hat{\Psit}^{(k+1)}_{\ell_{k}, i_{k+1}, \eta_{k+1} ,\ell_{k+1}}.
\]
\begin{proposition}\label{prop: monomial psi}
	Consider a monomial of the form $\psi(x, \eta) = (x \cdot \eta)^p$, $p \geq 0$, with $\eta \in Q$ and data points $x_i \in X$, $i=1, \dots, m$.
	The functional tensor $\Psit \in \R^{M \times Q_1 \times \cdots \times Q_s}$ defined by $\Psit_{i, \eta_1, \dots, \eta_s} = \psi(x_i, \eta)$ can be written as $\Psit = \Delta \cdot \hat{\Psit}$, $\hat{\Psit} \in \R^{M^s \times Q_1 \times \dots \times Q_s}$, where
	\begin{equation}\label{eq: monomial psi}
		\begin{split}
		\hat{\Psit} =&~p! \core{\frac{1}{p!} ( x_{:,1} \eta_1)^{p} & \cdots & x_{:,1} \eta_1 & e} \otimes \core{e & 0 & \cdots & 0 \\ x_{:,2} \eta_2 & \ddots & \ddots & \vdots\\ \vdots & \ddots & \ddots & 0 \\ \frac{1}{p!} ( x_{:,2}'\eta_2)^{p} & \cdots &  x_{:,2} \eta_2 & e} \otimes \cdots \\
		&~\otimes \core{e & 0 & \cdots & 0 \\  x_{:,s-1} \eta_{s-1} & \ddots & \ddots & \vdots\\ \vdots & \ddots & \ddots & 0 \\ \frac{1}{p!} ( x_{:,s-1} \eta_{s-1})^{p} & \cdots &  x_{:,s-1} \eta_{s-1} & e} \otimes \core{e  \\  x_{:,s} \eta_{s} \\ \vdots \\ \frac{1}{p!} ( x_{:,s} \eta_{s})^{p}}
		\end{split}
	\end{equation}
	with $e = (1, \dots, 1)^T \in \R^M$.
\end{proposition}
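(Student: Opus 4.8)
The plan is to read the displayed cores in \eqref{eq: monomial psi} as a chain of small matrices whose entries are the scalar functions $y_k := x_{i,k}\eta_k$, and to collapse the whole functional-tensor-train contraction by a telescoping application of the binomial theorem. Since the delta tensor $\Delta$ identifies all $s$ data indices (exactly as in the proof of Proposition~\ref{prop:KERNEL}), it suffices to evaluate the contraction with a single common data index $i$, so that the $k$-th core contributes the scalar $y_k = x_{i,k}\eta_k$ and $\sum_{k=1}^s y_k = x_i\cdot\eta$. Indexing the TT ranks by $a,b\in\{0,\dots,p\}$, I would first record that the interior cores are the lower-triangular matrices $M(y)$ with $M(y)_{a,b}=y^{a-b}/(a-b)!$ for $a\ge b$ and $0$ otherwise, that the last core is the column vector $c(t)$ with $c(t)_a = t^a/a!$, and that the first core is the row vector whose $b$-th entry is $y_1^{\,p-b}/(p-b)!$.

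The key algebraic fact is the addition formula $M(y)\,c(t) = c(y+t)$, which follows at once from the binomial theorem: $(M(y)c(t))_a = \sum_{b=0}^a \frac{y^{a-b}}{(a-b)!}\frac{t^b}{b!} = \frac{1}{a!}\sum_{b=0}^a\binom{a}{b}y^{a-b}t^b = \frac{(y+t)^a}{a!}$. I would then fold the contraction from the right by induction, the claim being that the contraction of cores $k$ through $s$ equals $c(y_k+y_{k+1}+\dots+y_s)$; the base case $k=s$ is the last core $c(y_s)$, and the inductive step is precisely the addition formula with $t = y_{k+1}+\dots+y_s$. Folding down to $k=2$ leaves the column vector $c(y_2+\dots+y_s)$, and contracting it with the first (row) core gives, once more by the binomial theorem, $\sum_{b=0}^p \frac{y_1^{\,p-b}}{(p-b)!}\frac{(y_2+\dots+y_s)^b}{b!} = \frac{1}{p!}(y_1+\dots+y_s)^p$. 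Multiplying by the prefactor $p!$ yields $(y_1+\dots+y_s)^p = (x_i\cdot\eta)^p = \psi(x_i,\eta) = \Psit_{i,\eta_1,\dots,\eta_s}$, which together with the identity $\Psit = \Delta\cdot\hat{\Psit}$ proves the statement.

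The genuine work here is bookkeeping rather than analysis. The main point requiring care is the matching of index conventions: the first core is written with its powers of $y_1$ in reversed order (highest power in the leftmost slot), and one must verify that this reversal is exactly what turns the final row--column contraction into the symmetric binomial sum $\frac{1}{p!}(y_1+\dots+y_s)^p$ rather than an unbalanced one. A second point is to make the role of $\Delta$ explicit: before contraction with $\Delta$ the $k$-th core carries its own data index $i_k$, and the purely formal binomial manipulations above in fact compute $\hat{\Psit}_{i_1,\dots,i_s,\eta} = \big(\sum_{k=1}^s x_{i_k,k}\eta_k\big)^p$; it is only the subsequent diagonal contraction by $\Delta$ that collapses $i_1=\dots=i_s=i$ and recovers $x_i\cdot\eta$. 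I would also dispatch the degenerate case $p=0$ separately, where every core reduces to the scalar $e$ and the claim holds trivially.
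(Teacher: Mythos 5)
Your proof is correct. It establishes the same factorization as the paper, but by a differently organized argument with a different key identity, so a comparison is worthwhile. The paper starts from the multinomial formula for $\bigl(\sum_{k=1}^s z_k\bigr)^p$, groups terms by the power of $z_1$, and peels variables off \emph{from the left}; its intermediate column vectors carry the partial multinomial sums $\sum_{\ell_k+\cdots+\ell_s=a}\tfrac{1}{\ell_k!\cdots\ell_s!}\,z_k^{\ell_k}\cdots z_s^{\ell_s}$, and the chain of matrix factorizations is asserted rather than verified entry by entry. You instead fold \emph{from the right}, with closed-form intermediate vectors $c(t)_a=t^a/a!$ evaluated at $t=y_{k}+\cdots+y_s$, and a single explicitly proved lemma $M(y)c(t)=c(y+t)$; the two sets of intermediate objects are in fact equal, by the multinomial theorem, so the underlying telescoping is the same. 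What your route buys is economy and completeness of bookkeeping: the only external input is the binomial theorem, each matrix product in the chain is checked by a one-line computation, the induction makes the telescoping transparent, and you make explicit two points the paper leaves implicit, namely that the TT contraction alone already yields $\hat{\Psit}_{i_1,\dots,i_s,\eta}=\bigl(\sum_k x_{i_k,k}\eta_k\bigr)^p$ at \emph{independent} data indices with $\Delta$ only collapsing $i_1=\cdots=i_s=i$ afterwards (exactly as in Proposition~\ref{prop:KERNEL}), and that the reversed ordering of the first core is what produces the balanced binomial sum $\tfrac{1}{p!}(y_1+\cdots+y_s)^p$ in the last contraction. What the paper's route buys is that the multinomial coefficients remain visible at every stage, which connects directly to the canonical-rank counts used elsewhere (e.g., in Appendix~\ref{A:LOGISTIC}); your treatment of the degenerate case $p=0$ is a harmless addition.
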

\begin{proof}
	By using the multinomial formula, we know
	\[
		\left( \sum_{k=1}^s z_k \right)^p = \sum_{\substack{\ell_1 + \dots + \ell_s = p \\ \ell_1 , \dots, \ell_s \geq 0}} \frac{p!}{\ell_1! \cdots \ell_s!} z_1^{\ell_1} \cdots z_s^{\ell_s}
	\]
	for any $i \in \{1, \dots, M\}$. This can be written as a sequence of matrix multiplications of the form 
	\[
		\begin{split}
	 \left( \sum_{k=1}^s z_k \right)^p =&~p! \begin{bmatrix} z_1^{p} & \cdots & z_1^{0} \end{bmatrix} 
	 	\begin{bmatrix} \displaystyle \sum_{\ell_2 + \dots + \ell_s = 0 } \frac{1}{\ell_2! \cdots \ell_s!} z_2^{\ell_2} \cdots z_s^{\ell_s}\\ \vdots \\
	 	\displaystyle \sum_{\ell_2 + \dots + \ell_s = p } \frac{1}{\ell_2! \cdots \ell_s!} z_2^{\ell_2} \cdots z_s^{\ell_s}  \end{bmatrix}\\
	 	=&~p! \begin{bmatrix} z_2^{p} & \cdots & z_2^{0} \end{bmatrix} 
	 	\begin{bmatrix}
	 		z_2^{0} &  & 0 \\
	 		\vdots & \ddots &  \\
	 		z_2^{p} & \cdots & z_2^{0}
	 	\end{bmatrix}
	 	\begin{bmatrix} \displaystyle \sum_{\ell_3 + \dots + \ell_s = 0 } \frac{1}{\ell_3! \cdots \ell_s!} z_3^{\ell_3} \cdots z_s^{\ell_s} \\ \vdots \\
	 	\displaystyle \sum_{\ell_3 + \dots + \ell_s = p } \frac{1}{\ell_3! \cdots \ell_s!} z_3^{\ell_3} \cdots z_s^{\ell_s} \end{bmatrix} \\
 	=&~\dots \\
 	=&~p! \begin{bmatrix} z_2^{p} & \cdots & z_2^{0} \end{bmatrix} 
 	\begin{bmatrix}
 		z_2^{0} &  & 0 \\
 		\vdots & \ddots &  \\
 		z_2^{p} & \cdots & z_2^{0}
 	\end{bmatrix} \cdots
 	\begin{bmatrix}
 		z_{s-1}^{0} &  & 0 \\
 		\vdots & \ddots &  \\
 		z_{s-1}^{p} & \cdots & z_{s-1}^{0}
 	\end{bmatrix}
 	\begin{bmatrix} z_s^0 \\ \vdots \\ z_s^p \end{bmatrix}.
	 	\end{split}
	\]
	Thus, for $\Psit = \Delta \hat{\Psit}$ with $\hat{\Psit}$ as given in~\eqref{eq: monomial psi}, we get
	\[
		\begin{split}
		\Psit_{i, \eta_1 , \dots , \eta_s} =&~\sum_{j_1=1}^M \dots \sum_{j_s=1}^M \Delta_{i,j_1, \dots, j_s}  \hat{\Psit}_{j_1 , \dots, j_s, \eta_1, \dots, \eta_s}\\
		=&~\sum_{j_1=1}^M \dots \sum_{j_s=1}^M \delta_{i,j_1} \cdots \delta_{i,j_s}  \hat{\Psit}_{j_1 , \dots, j_s, \eta_1, \dots, \eta_s}\\
		=&~\hat{\Psit}_{i, \dots, i, \eta_1, \dots, \eta_s}  = \left( \sum_{k=1}^s x_{i,k} \eta_k \right)^p  = (x_i \cdot \eta)^p.\qedhere
		\end{split}
	\]
\end{proof}

\subsection{Trivial activation function} \label{A:TRIV}

In the simplest case $\sigma = \text{Id}$, the ridge kernel function is bilinear, i.e., $\psi(x, \eta) = x \cdot \eta$, and clearly can be written in the form \eqref{eq:FACKER} (see Example~\ref{ex:FACKER}).
Let us consider the  functional tensor
\[
{\mathbf \Psi}\in \R^{M \times Q_1 \times \cdots \times Q_s}, \qquad  \mathbf{\Psi}_{i, \eta_1, \dots, \eta_s} = x_i \cdot \eta = \sum_{k=1}^s  x_{i,k} \eta_k, \qquad s=d,
\]
associated to $\psi$ according to \eqref{eq:FUNTENS} for the given data points $x_i \in X$, $i=1,\dots,M$.
It can be expressed as $\Psit = \Delta \cdot \hat{\Psit}$, see Proposition~\ref{prop:KERNEL}, where the auxiliary tensor $\hat{\Psit}$ has the functional canonical decomposition
\begin{equation}\label{eq: triv_canonical}
 \hat{\Psit} = \eta_1 \begin{pmatrix} x_{1,1} \\ \vdots \\ x_{M,1} \end{pmatrix} \otimes \begin{pmatrix} 1 \\ \vdots \\ 1 \end{pmatrix} \otimes \dots \otimes \begin{pmatrix} 1 \\ \vdots \\ 1 \end{pmatrix}  ~ + ~ \cdots ~ + ~ \begin{pmatrix} 1 \\ \vdots \\ 1 \end{pmatrix} \otimes \dots \otimes \begin{pmatrix} 1 \\ \vdots \\ 1 \end{pmatrix} \otimes \eta_d \begin{pmatrix} x_{1,d} \\ \vdots \\ x_{M,d} \end{pmatrix} 
\end{equation}
Using the core notation~\eqref{eq:CORE} and Proposition~\ref{prop: monomial psi}, we can also represent $\hat{\Psit}$ as a functional tensor train:
\begin{equation}\label{eq: triv_ftt}
 \hat{\Psit} = \core{x_{:,1} \eta_1 & e } \otimes \core{ e & 0 \\  x_{:,2} \eta_2 & e } \otimes \dots \otimes \core{ e & 0 \\  x_{:,d-1} \eta_{d-1} & e } \otimes \core{ e  \\  x_{:,d} \eta_{d} }.
\end{equation}
Note that the canonical rank of~\eqref{eq: triv_canonical} is equal to $d$ whereas the FTT ranks $r_1, \dots, r_{d-1}$ of~\eqref{eq: triv_ftt} are always equal to $2$ for any state space dimension $d$.

\subsection{Logistic activation function}\label{A:LOGISTIC}
For the logistic activation function $\sigma_\kappa$ given in \eqref{eq:LOGISTIC}, 
the  kernel function takes the form
\begin{equation} \label{eq:LOGTAYLOR}
 \psi_\kappa(x, \eta) = \frac{\exp(\kappa \,  x \cdot \eta)}{1 + \exp(\kappa \,  x \cdot \eta))} = \frac{1}{1+ \exp(-\kappa \,  x \cdot \eta)} = \frac{1}{2} + \frac{1}{2} \tanh \left(\frac{-\kappa \,  x \cdot \eta}{2} \right).
\end{equation}
We expand  $\sigma_\kappa(z)$ into a Taylor series at $z=0$,  to obtain
\begin{equation}\label{eq: logistic AF - Taylor}
 \psi_\kappa(x, \eta ) = \frac{1}{2} + \frac{\kappa \,  x \cdot \eta} {4} - \frac{\kappa^3 \,  (x \cdot \eta)^3} {48} + O( x \cdot \eta)^5.
\end{equation}
Assuming that the side lengths of $Q$ are sufficiently small, 
we skip the quintic part in \eqref{eq: logistic AF - Taylor}. 
The resulting approximation of $\psi_\kappa$ can be written in the form of \eqref{eq:FACKER} with $r= 1 + s + \frac{1}{6}(s^3 + 3 s^2 + 2 s)$.
Indeed, the zero- and the first-order term in  \eqref{eq: logistic AF - Taylor} can be written as a functional canonical tensor with rank $1 + s $ summands, cf.~\eqref{eq: triv_canonical},
while, by the multinomial formula, the cubic term can be written as
\[
\frac{\kappa^3 \,  (x \cdot \eta)^3} {48} =
 \frac{\kappa^3}{48} \left(\sum_{k=1}^s x_k\eta_k\right)^3 =  \frac{\kappa^3}{48} \sum_{\ell_1 + \dots + \ell_s =3} \frac{6}{\ell_1!\cdots \ell_s!}
(x_{1}\eta_{1})^{\ell_1}\cdots (x_{s}\eta_{s})^{\ell_s}
\]
with  
 $\sum_{k_3=1}^s \sum_{k_2=1}^{k_3} \sum_{k_1=1}^{k_2} 1 = \frac{1}{6}(s^3 + 3s^2 + 2s)$ summands.
In contrast to that, the ranks of the corresponding FTT decomposition are always bounded by $4$ for any parameter dimension $s$.
This can be seen by using Proposition~\ref{prop: monomial psi} for each of the first three terms in~\eqref{eq: logistic AF - Taylor} and adding the respective decompositions.
The resulting FTT representation of the auxiliary tensor is then given by 
\[
	\renewcommand*{\arraystretch}{1.2}
	\setlength{\arraycolsep}{2pt}
	\begin{split}
		\hat{\Psit} =&  \core{ \frac{1}{6}(\eta_1 x_{:,1})^3 & \frac{1}{2}(\eta_1 x_{:,1})^2 & \eta_1 x_{:,1} & e } \otimes \core{ e & 0 & 0 & 0 \\ \eta_2  x_{:,2} & e & 0 & 0 \\ \frac{1}{2}(\eta_2  x_{:,2})^2 & \eta_2  x_{:,2} & e & 0 \\ \frac{1}{6}(\eta_2  x_{:,2})^3 & \frac{1}{2}(\eta_2  x_{:,2})^2 & \eta_2  x_{:,2} & e } \otimes \dots \\
		&\otimes \core{ e & 0 & 0 & 0 \\ \eta_{d-1}  x_{:,{d-1}} & e & 0 & 0 \\ \frac{1}{2}(\eta_{d-1}  x_{:,{d-1}})^2 & \eta_{d-1}  x_{:,{d-1}} & e & 0 \\ \frac{1}{6}(\eta_{d-1}  x_{:,{d-1}})^3 & \frac{1}{2}(\eta_{d-1}  x_{:,{d-1}})^2 & \eta_{d-1}  x_{:,{d-1}} & e } \otimes \core{ -\frac{\kappa^3}{8} e \\ -\frac{\kappa^3}{8} \eta_{d}  x_{:,{d}} \\ -\frac{\kappa^3}{16} (\eta_{d}  x_{:,{d}})^2 + \frac{\kappa}{4} e\\ -\frac{\kappa^3}{48}(\eta_{d}  x_{:,{d}})^3 + \frac{\kappa}{4} \eta_{d}  x_{:,{d}} + \frac{1}{2} e }.
	\end{split}
\]

\subsection{Heaviside activation function} \label{A:HEAVI}
For each fixed $z\in \R$ and $\kappa \to \infty$
the logistic activation function $\sigma_\kappa(z)$ 
converges to the  Heaviside  function $\sigma_\infty(z)$ defined in \eqref{eq:HEAVI}.
However, as the interval of convergence $[-\frac{\pi}{\kappa}, \frac{\pi}{\kappa}]$
of the Taylor expansion~\eqref{eq: logistic AF - Taylor} shrinks with incresing $\kappa$,
we now derive a different kind of approximation of the Heaviside kernel function 
$\psi_\infty(x,\eta)=\sigma_\infty(x\cdot \eta)$ of the form \eqref{eq:FACKER}.
Our starting point is the Fourier epansion 
\[
 \sigma_\infty(z) = \frac{1}{2} + \frac{2}{\pi} \sum_{\ell=1}^\infty \frac{1}{2\ell-1} \sin \left( \frac{(2\ell-1) \pi}{T} z \right),
\]
which converges pointwise within the interval $[-T, T]$ for suitable $T>0$. 
Euler's formula   $\sin(z) = (\exp(\mathrm{i}z)-\exp(-\mathrm{i}z))/2\mathrm{i}$ yields
\[
 \sigma_\infty(z) = \frac{1}{2} + \frac{1}{\pi\mathrm{i}} \sum_{\ell=1}^\infty \frac{1}{2\ell-1} \left( \exp \left( \frac{(2\ell-1) \pi \mathrm{i}}{T} z \right) - \exp \left( -\frac{(2\ell-1) \pi \mathrm{i}}{T} z \right) \right).
\]
Inserting $z = x\cdot \eta$, we obtain the representation 
\[
 \psi_\infty(x, \eta) = \frac{1}{2} + \frac{1}{\pi\mathrm{i}} \sum_{\ell=1}^\infty \frac{1}{2\ell-1} \left( \prod_{k=1}^d \exp \left( \frac{(2\ell-1) \pi \mathrm{i}}{T} x_k  \eta_k \right) - \prod_{k=1}^d \exp \left( -\frac{(2\ell-1) \pi \mathrm{i}}{T} x_k  \eta_k\right) \right)
\]
of the Heaviside kernel function for $x\cdot \eta \in[-T,T]$.
Truncating this expansion at $\ell = \ell_0$, 
we obtain the desired factorization  \eqref{eq:FACKER} 
of the resulting approximation with $r= 2 \ell_0 + 1$ summands.
In this case, the  decomposition \eqref{eq:KERDEC} 
of the associated functional tensor $\mathbf{\Psi}$ reads
\[
\begin{split}
 \mathbf{\Psi} & =  \Delta \cdot \left[ \frac{1}{2} \bigotimes_{k=1}^d e ~+~ \frac{1}{\pi\mathrm{i}} \sum_{\ell=1}^{\ell_0} \frac{1}{2\ell-1}  \bigotimes_{k=1}^d \exp \left( \frac{(2\ell-1) \pi \mathrm{i}}{T} \eta_k x_{:,k} \right) \right. \\
 &\qquad \qquad \left. - ~\frac{1}{\pi\mathrm{i}} \sum_{\ell=1}^{\ell_0} \frac{1}{2\ell-1} \bigotimes_{k=1}^d \exp \left( -\frac{(2\ell-1) \pi \mathrm{i}}{T} \eta_k x_{:,k} \right)  \right].
\end{split}
\]

\bibliography{GelssIssagaliKhSIMODS_Revision}
\bibliographystyle{unsrturl}
\end{document}